\documentclass{amsart}
\usepackage{amsfonts,amssymb,amscd,amsmath,enumerate,verbatim,calc}
\usepackage{amscd,amssymb,amsopn,amsmath,amsthm,graphics,amsfonts,enumerate,verbatim,calc}
\usepackage[dvips]{graphicx}
\usepackage[colorlinks=true,linkcolor=red,citecolor=blue]{hyperref}
\input xy
\xyoption{all}
\addtolength{\textwidth}{1.5cm}
\calclayout

\newcommand{\rt}{\rightarrow}
\newcommand{\lrt}{\longrightarrow}

\newcommand{\st}{\stackrel}

\newcommand{\la}{\lambda}
\newcommand{\La}{\Lambda}

\newcommand{\fm}{\frak{m}}

\newcommand{\C}{\mathbb{C} }
\newcommand{\D}{\mathbb{D} }
\newcommand{\K}{\mathbb{K} }
\newcommand{\N}{\mathbb{N} }
\newcommand{\Z}{\mathbb{Z} }

\newcommand{\CA}{\mathcal{A} }

\newcommand{\CF}{\mathcal{F} }

\newcommand{\CQ}{\mathcal{Q} }

\newcommand{\CT}{\mathcal{T} }

\newcommand{\CV}{\mathcal{V}}
\newcommand{\CU}{\mathcal{U}}

\newcommand{\BZ}{\mathbf{Z}}

\newcommand{\CCF}{{\rm Cot}\mbox{-} \mathcal{F}}

\newcommand{\X}{\mathbf{X}}
\newcommand{\Y}{\mathbf{Y}}

\newcommand{\PP}{\mathbf{P}}

\newcommand{\Mod}{{\rm{Mod\mbox{-}}}}

\newcommand{\mmod}{{\rm{{mod\mbox{-}}}}}

\newcommand{\Inj}{{\rm{Inj}\mbox{-}}}
\newcommand{\Prj}{{\rm{Prj}\mbox{-}}}
\newcommand{\prj}{{\rm{prj}\mbox{-}}}

\newcommand{\inj}{{\rm{inj}\mbox{-}}}

\newcommand{\op}{{\rm{op}}}

\newcommand{\ac}{{\rm{ac}}}

\newcommand{\bb} {{\rm{b}}}

\newcommand{\gldim}{{\rm{gl.dim}}}

\newcommand{\Hom}{{\rm{Hom}}}
\newcommand{\Ext}{{\rm{Ext}}}
\newcommand{\End}{{\rm{End}}}

\theoremstyle{plain}
\newtheorem{theorem}{Theorem}[section]
\newtheorem{corollary}[theorem]{Corollary}
\newtheorem{lemma}[theorem]{Lemma}

\newtheorem{proposition}[theorem]{Proposition}

\theoremstyle{definition}

\newtheorem{remark}[theorem]{Remark}

\theoremstyle{plain}

\theoremstyle{definition}

\numberwithin{equation}{section}

\begin{document}

\title[Duality and Serre functor]{Duality and Serre functor in homotopy categories }

\author[Asadollahi, Asadollahi, Hafezi, Vahed]{J. Asadollahi, N. Asadollahi, R. Hafezi and R. Vahed}

\address{Department of Mathematics, University of Isfahan, P.O.Box: 81746-73441, Isfahan, Iran}
\email{asadollahi@ipm.ir, asadollahi@sci.ui.ac.ir}

\address{Department of Mathematics, University of Isfahan, P.O.Box: 81746-73441, Isfahan, Iran}
\email{n.asadollahi@sci.ui.ac.ir}

\address{School of Mathematics, Institute for Research in Fundamental Sciences (IPM), P.O.Box: 19395-5746, Tehran, Iran }
\email{hafezi@ipm.ir}

\address{Department of Mathematics, Khansar Faculty of Mathematics and Computer Science, Khansar, Iran  }
\email{vahed@ipm.ir, vahed@khansar-cmc.ac.ir}

\subjclass[2010]{18E30, 16E35, 18G25}

\keywords{Functor category, derived category, artin algebra, duality}


\begin{abstract}
For a (right and left) coherent ring $A$, we show that there exists a duality between homotopy categories $\K^{\bb}(\mmod A^{\op})$ and $\K^{\bb}(\mmod A)$. If $A=\La$ is an artin algebra of finite global dimension, this duality restricts to a duality between their subcategories of acyclic complexes, $\K^{\bb}_{\ac}(\mmod \La^{\op})$ and $\K^{\bb}_{\ac}(\mmod \La).$  As a result, it will be shown that, in this case, $\K_{\ac}^{\bb}(\mmod \La)$ admits a Serre functor and hence has Auslander-Reiten triangles.
\end{abstract}

\maketitle


\section{Introduction}
A contravariant functor between two categories that is an equivalence is called a duality. The role and importance of dualities is known in representation theory of algebras. Let $A$ be a right and left coherent ring. In this paper, we introduce and study a duality between the bounded homotopy categories of finitely generated right and finitely generated left $A$-modules, denoted by
$\K^{\bb}(\mmod A)$ and $\K^{\bb}(\mmod A^{\op})$, respectively. We gain this duality starting from an equivalence
$$ \mu: \D(\Mod (\mmod A^{\op})) \lrt \D(\Mod (\mmod A)^{\op})$$
of derived categories of functor categories.

The relationship between $\mu$ and some known dualities will be discussed. In particular, it is shown that, Proposition \ref{AusGJ-Dualtity} below,  there is a close relationship between $\mu$ and the Auslander-Gruson-Jensen duality
 $$\mathfrak{D} : \mmod (\mmod A^{\op})^{\op} \lrt \mmod (\mmod A)^{\op}.$$

Let $\La$ be an artin algebra of finite global dimension over a commutative artinian ring $R$. We show that in this case, the above  duality between $\K^{\bb}(\mmod \La^{\op})$ and $\K^{\bb}(\mmod \La)$ restricts to a duality between $\K^{\bb}_{\ac}(\mmod \La^{\op})$ and $\K^{\bb}_{\ac}(\mmod \La),$ where for an abelian category $\CA$, $\K_{\ac}^{\bb}(\CA)$ is the full subcategory of $\K^{\bb}(\CA)$ consisting of all acyclic complexes. This, in turn, implies that there is an equivalence of triangulated categories
\[\xymatrix{ \frac{\K^{\bb}(\mmod \La)}{\K^{\bb}(\prj \La)}  \ar[r]^{\sim} & \frac{\K^{\bb}(\mmod \La)}{\K^{\bb}(\inj \La)}.}\]
Note that under certain conditions, the quotient $\frac{\K^{\bb}(\mmod \La)}{\K^{\bb}(\prj \La)}$ is equivalent to the relative singularity category introduced and studied recently in \cite{KY}, see Remark \ref{KY}.

Finally, we show that $\K_{\ac}^{\bb}(\mmod \La)$ admits a Serre functor $\mathbb{S}$ in the sense of \cite{BK}. By a well-known result of Reiten and Van den Bergh \cite[Theorem I.2.4]{RV}, the existence of a Serre functor is equivalent to the existence of Auslander-Reiten triangles in a category and so we deduce that $\K_{\ac}^{\bb}(\mmod \La)$ admits Auslander-Reiten triangles.

\section{Preliminaries}\label{Section 2}
Throughout the paper, $A$ denotes a right and left coherent ring. $A$-module means right $A$-module. $\Mod A$, resp. $\mmod A$, denotes the category of $A$-modules, resp. finitely presented $A$-modules. $\Prj A$, resp. $\prj A$, denotes the full subcategory of $\Mod A$, resp. $\mmod A$, consisting of projective $A$-modules. $\Inj A$ and $\inj A$ represent injectives and finitely presented injectives, resp. For an additive category $\CA$, $\D(\CA)$, resp. $\K(\CA)$, denotes the derived category, resp. homotopy category, of $\CA$. As usual, the bounded derived, resp. homotopy, category of $\CA$, will be denoted by $\D^{\bb}(\CA)$, resp. $\K^{\bb}(\CA)$.

\s Following Auslander we let $\Mod(\mmod A)$, resp. $\Mod(\mmod A)^{\op}$, denote the category of all contravariant, resp. covariant, additive functors from $\mmod A$ to $\CA b$, the category of abelian groups. Throughout we shall use parenthesis to denote the Hom sets. An object $F$ of $\Mod(\mmod A)$, resp. $\Mod(\mmod A)^{\op}$, is called coherent if there exists a short exact sequence
\[ \ \ \ \ \ \ \ \  (-, X) \lrt (-,Y) \lrt F \lrt 0, \]
\[{\rm resp.}  \ \ (X, - ) \lrt (Y, - ) \lrt F \lrt 0,  \]
of functors, where $X$ and $Y$ belong to $\mmod A$. We let $\mmod(\mmod A)$, resp. $\mmod(\mmod A)^{\op}$, denote the full subcategory of  $\Mod(\mmod A)$, resp. $\Mod(\mmod A)^{\op}$, consisting of all coherent functors. It is known \cite{As1} that both $\Mod(\mmod A)$ and $\mmod(\mmod A)$ and also their counterparts of covariant functors are abelian categories with enough projective objects.

Special objects of such categories have been studied by several authors. In particular, an object $F$ of $\Mod(\mmod A)$ is flat if and only if there exists an $A$-module $M$ such that $F \cong \Hom_A(-, M)$, see \cite[Theorem B.10]{JL}. We let $\CF(\mmod A)$ denote the full subcategory of $\Mod(\mmod A)$ consisting of all flat functors.

\s \label{Krause-Stov} A sequence $0 \rt X \rt Y \rt Z \rt 0$ of $A$-modules is called pure exact if the induced sequence
\[0 \lrt X\otimes_AM \lrt Y\otimes_AM \lrt Z\otimes_AM \lrt 0,\]
is exact, for every left $A$-module $M$. A module $P$ is called pure projective, resp. pure injective, if it is projective, resp. injective, with respect to the class of all pure exact sequences. We let ${\rm PPrj} \mbox{-} A$, resp. ${\rm PInj} \mbox{-} A$, denote the full subcategory of $\Mod A$ consisting of all pure projective, resp. pure injective, $A$-modules.

The derived category of $A$ with respect to the pure exact structure is called pure derived category and is denoted by $\D_{\rm pur}(\Mod A)$. Krause \cite{K12} introduced and studied this category in deep. He \cite[Corollary 6]{K12} proved that for a ring $A$, there exists a triangle equivalence
\[\D_{\rm pur}(\Mod A) \simeq \D(\Mod (\mmod A)).\]

\s \label{Equi-1} Let $\K_{\ac}(\CF(\mmod A))$ be the full subcategory of $\K(\CF(\mmod A))$ formed by all acyclic complexes of flat functors. It is a thick subcategory of $\K(\CF(\mmod A))$ and so we have the quotient category $\frac{\K(\CF(\mmod A))}{\K_{\ac}(\CF(\mmod A))}$. By \cite[Lemma 4.4]{AAHV}, there exists a triangle equivalence
$$\varphi: \D(\Mod (\mmod A)) \lrt \frac{\K(\CF(\mmod A))}{\K_{\ac}(\CF(\mmod A))}.$$
This equivalence maps every complex $\X$ in $\D(\Mod (\mmod A))$ to a complex ${\bf F}$ in $\K(\CF(\mmod A))$, where ${\bf F}$ fits into a short exact sequence
$$ 0 \lrt {\bf C} \lrt {\bf F} \lrt \X \lrt 0$$
in $\C(\Mod (\mmod A))$ in which $\Ext^i({\bf F}' , {\bf C})=0$, for $i>0$ and for all ${\bf F}' \in \C(\CF(\mmod A))$.

\s \label{Equi-2} Recall that an object $C$ of an abelian category $\CA$ is called cotorsion if for every flat object $F$, $\Ext^1(F,C)=0$. Let ${\rm Cot\mbox{-}\CF}(\mmod A)$ denote the full subcategory of $\CF(\mmod A)$ consisting of all cotorsion-flat functors. By \cite[Theorem 4]{H}, a flat functor $(-,M)$ in $\Mod(\mmod A)$ is cotorsion if and only if $M$ is a pure-injective module. So the fully faithful functor $U: \Mod A \lrt \Mod (\mmod A)$ induces an equivalence
$$\K({\rm P}\Inj A) \st{\st{\K(U)}\sim} \lrt \K({\rm Cot}\mbox{-} \CF(\mmod A)),$$
of triangulated categories.

\s \label{Equi-3} By Remark 4.5 of \cite{AAHV}, for every complex ${\bf F}$ in $\K(\CF(\mmod A))$, there is a triangle
$$  {\bf G} \lrt {\bf F} \lrt {\bf C} \rightsquigarrow$$
in $\K(\CF(\mmod A))$  with ${\bf C} \in \K( {\rm Cot\mbox{-}\CF}(\mmod A))$ and ${\bf G} \in \K_{\ac}(\CF(\mmod A))$. So, there is a triangle functor
$$ \psi : \frac{\K(\CF(\mmod A))}{\K_{\rm ac}(\CF(\mmod A))} \lrt \K(\CCF(\mmod A)),$$
given by $\psi ({\bf F})={\bf C}$; see \cite[Proposition 2.6]{Mi} for more details.

\section{Dualities of homotopy categories}\label{Section 3}
Throughout the section, $A$ is a right and left coherent ring. Our aim is to show that there is a duality of triangulated categories
$$ \K^{\bb}(\mmod A^{\op}) \lrt \K^{\bb}(\mmod A),$$
that restricts to a duality between their full subcategories of all acyclic complexes
$$\K^{\bb}_{\ac}(\mmod A^{\op}) \lrt \K^{\bb}_{\ac}(\mmod A).$$

\s \label{Equi} In view of \ref{Equi-1}, \ref{Equi-2} and \ref{Equi-3}, we get an equivalence $\Psi: \D(\Mod(\mmod A)) \rt \K({\rm PInj} \mbox{-} A)$ of triangulated categories, given by the following composition
$$ \Psi: \D(\Mod (\mmod A)) \st{\varphi}\lrt \frac{\K(\CF(\mmod A))}{\K_{\ac}(\CF(\mmod A))} \st{\psi}\lrt \K(\CCF(\mmod A)) \st{\K(U)^{-1}} \lrt \K({\rm P}\Inj A),$$
of equivalences. Throughout we will use this equivalence.

\begin{lemma}\label{Prop-equi}
There is an equivalence
$$\mu: \D(\Mod(\mmod A^{\op})) \st{\sim} \lrt \D(\Mod(\mmod A)^{\op})$$
of triangulated categories, given by $$\mu(\X)= - \otimes_A \Psi(\X),$$ where $\Psi$ is the equivalence introduced in \ref{Equi}.
\end{lemma}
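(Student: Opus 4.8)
The plan is to factor $\mu$ as a composite of two equivalences, using $\Psi$ on the $A^{\op}$ side and the ``covariant/tensor'' counterpart of $\Psi$ on the $A$ side. First I would apply the equivalence of \ref{Equi} to the ring $A^{\op}$: since $A$ is two-sided coherent, so is $A^{\op}$, and we obtain
$$\Psi \colon \D(\Mod(\mmod A^{\op})) \st{\sim}\lrt \K({\rm PInj}\mbox{-} A^{\op}),$$
where ${\rm PInj}\mbox{-} A^{\op}$ is the category of pure-injective left $A$-modules. Thus it suffices to produce an equivalence $\K({\rm PInj}\mbox{-} A^{\op}) \st{\sim}\lrt \D(\Mod(\mmod A)^{\op})$ that is implemented on objects by $N^{\bu} \mapsto (-\otimes_A N^{\bu})$; composing it with $\Psi$ then yields exactly $\mu(\X)=-\otimes_A\Psi(\X)$.

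Second, I would set up the covariant analogue of $\Psi$ inside the abelian category $\Mod(\mmod A)^{\op}$ of covariant functors $\mmod A \rt \CA b$. Here the relevant dictionary is the tensor version of the two structure results used in \ref{Equi-1}--\ref{Equi-2}: a covariant functor is flat precisely when it has the form $-\otimes_A N$ for some left $A$-module $N$ (the tensor analogue of \cite[Theorem B.10]{JL}), and such a flat functor is cotorsion precisely when $N$ is pure-injective (the analogue of \cite[Theorem 4]{H}). Granting these, the assignment $V \colon \Mod A^{\op} \lrt \Mod(\mmod A)^{\op}$, $N \mapsto -\otimes_A N$, is fully faithful and identifies pure-injective left $A$-modules with cotorsion-flat covariant functors, so it induces an equivalence $\K({\rm PInj}\mbox{-} A^{\op}) \st{\sim}\lrt \K(\CCF(\mmod A)^{\op})$ exactly as $\K(U)$ does in \ref{Equi-2}. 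Running the arguments of \ref{Equi-1} and \ref{Equi-3} verbatim in $\Mod(\mmod A)^{\op}$ then produces an equivalence
$$\Phi \colon \D(\Mod(\mmod A)^{\op}) \st{\sim}\lrt \K({\rm PInj}\mbox{-} A^{\op}).$$

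Finally, I would identify $\mu$ with $\Phi^{-1}\circ\Psi$. Unwinding $\Phi^{-1}=(\varphi')^{-1}\circ(\psi')^{-1}\circ\K(V)$ (the covariant versions of $\varphi,\psi$), a complex $N^{\bu}$ of pure-injective left modules is first sent by $\K(V)$ to the complex of cotorsion-flat functors $-\otimes_A N^{\bu}$, and then carried to the object it represents in $\D(\Mod(\mmod A)^{\op})$; hence $\Phi^{-1}(N^{\bu})=-\otimes_A N^{\bu}$. Taking $N^{\bu}=\Psi(\X)$ gives $\mu(\X)=\Phi^{-1}(\Psi(\X))=-\otimes_A\Psi(\X)$, so that $\mu$ is the composite of the two equivalences $\Psi$ and $\Phi^{-1}$ and is therefore an equivalence of triangulated categories; in particular the formula $-\otimes_A\Psi(\X)$ is well defined on the derived category. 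I expect the main obstacle to be establishing the covariant analogues of the flat- and cotorsion-characterizations, i.e. checking that the whole package of \ref{Equi-1}--\ref{Equi-3} dualizes when $\Hom_A(-,M)$ is replaced by $-\otimes_A N$ and pure-injectivity is retained as the cotorsion condition; once this tensor dictionary is verified, the remainder of the argument is formal.
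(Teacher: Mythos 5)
Your overall architecture coincides with the paper's: factor $\mu$ as $\Psi$ applied over $A^{\op}$, followed by an equivalence $\K({\rm PInj}\mbox{-}A^{\op})\st{\sim}\lrt\D(\Mod(\mmod A)^{\op})$ implemented by $N\mapsto -\otimes_A N$. The gap is in how you construct that second equivalence. Your ``tensor dictionary'' starts from the claim that a covariant functor $\mmod A\rt \CA b$ is flat precisely when it has the form $-\otimes_A N$; this is false, and it is not the analogue of \cite[Theorem B.10]{JL}. In any functor category the flat objects are the filtered colimits of (summands of finite sums of) representables. On the contravariant side this yields $(-,M)$ for arbitrary $M$ because $\varinjlim(-,X_i)\cong(-,\varinjlim X_i)$; on the covariant side the representables $(X,-)$ depend contravariantly on $X$, so no such identification exists. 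Concretely, for $A=\Z$ the representable $(\Z/2,-)$ is a projective, hence flat, object of $\Mod(\mmod \Z)^{\op}$ which is not a tensor functor (a tensor functor $-\otimes_{\Z}N$ satisfies $N\cong(\,-\otimes_{\Z}N)(\Z)$, and $(\Z/2,-)(\Z)=0$ while $(\Z/2,-)(\Z/2)\neq 0$); conversely $-\otimes_{\Z}\Z/2$ is a finitely presented tensor functor that is not flat, since a finitely presented flat object would be a summand of a finite sum of representables, yet $(\,-\otimes_{\Z}\Z/2)(\Z)=\Z/2$ is torsion whereas every $\Hom_{\Z}(X,\Z)$ is torsion-free. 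The correct statement, due to Gruson and Jensen, is that the tensor embedding identifies $\Mod A^{\op}$ with the fp-injective (absolutely pure) objects of $\Mod(\mmod A)^{\op}$, carrying the pure-injective modules onto the injective objects. Hence you cannot run \ref{Equi-1}--\ref{Equi-3} ``verbatim'' on the covariant side with tensor functors playing the role of flats.

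The repair is to pass through injectives rather than flats, which is exactly what the paper does: since every injective object of $\Mod(\mmod A)^{\op}$ is $-\otimes_A M$ with $M\in{\rm PInj}\mbox{-}A^{\op}$ by \cite{GJ}, the tensor embedding induces $\K({\rm PInj}\mbox{-}A^{\op})\simeq\K(\Inj(\Mod(\mmod A)^{\op}))$, and one then invokes the analogue of \cite[Lemma 4.8]{K} to identify $\K(\Inj(\Mod(\mmod A)^{\op}))$ with $\D(\Mod(\mmod A)^{\op})$. Note that this last step is itself not formal: for a general Grothendieck category the functor $\K(\Inj)\rt\D$ has a nontrivial kernel, so the special homological features of the covariant functor category over a coherent ring must be used here, not merely a dualization of the flat/cotorsion package. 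With that substitution, your concluding identification $\mu=\Phi^{-1}\circ\Psi$ and the formula $\mu(\X)=-\otimes_A\Psi(\X)$ go through as you describe.
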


\begin{proof}
By \cite{GJ}, an object $E$ of $\Mod(\mmod A)^{\op}$ is injective if and only if $E \cong - \otimes_A M$, for some $M \in  {\rm PInj} \mbox{-} A^{\op}$. Therefore, the full and faithful functor ${\rm P}\Inj A^{\op} \lrt \Mod (\mmod A)^{\op}$, given by the rule $M \mapsto - \otimes_A M$, induces  the following triangle equivalence
$$ \K({\rm PInj}\mbox{-} A^{\op}) \simeq \K(\Inj(\Mod(\mmod A)^{\op})).$$
Moreover, similar to Lemma 4.8 of \cite{K}, one can see that the canonical functor $$\K(\Inj(\Mod(\mmod A)^{\op})) \lrt \D(\Mod(\mmod A)^{\op})$$ is an equivalence of triangulated categories. So, in view of the equivalence $$\Psi: \D(\Mod(\mmod A^{\op})) \lrt \K({\rm PInj} \mbox{-} A^{\op})$$ of \ref{Equi}, we get the desired equivalence  $$\mu : \D(\Mod (\mmod A^{\op})) \st{\sim} \lrt \D(\Mod (\mmod A)^{\op}).$$
\end{proof}

\begin{lemma}\label{rem2-def}
Let $\mu$ be the equivalence of Lemma \ref{Prop-equi}. Let $t: M \lrt N$ be an $A$-homomorphism of left $A$-modules. Then $\mu((-,M)) = - \otimes_A M $ and $\mu((-,t))=-\otimes_A t$.
\end{lemma}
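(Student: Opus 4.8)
The plan is to trace the representable functor $(-,M)$ through the three equivalences $\varphi$, $\psi$ and $\K(U)^{-1}$ whose composite is $\Psi$, and only afterwards apply the tensor embedding $M\mapsto -\otimes_A M$. The guiding principle is that $\Psi((-,M))$ should be nothing more than a pure-injective resolution of $M$, and that tensoring such a resolution produces an injective resolution of $-\otimes_A M$ in $\Mod(\mmod A)^{\op}$; both assertions of the lemma then fall out.

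First I would compute $\Psi((-,M))$. Since $(-,M)=\Hom_{A^{\op}}(-,M)$ is a flat functor, the stalk complex $(-,M)$ already lies in $\C(\CF(\mmod A^{\op}))$, so in \ref{Equi-1} one may take ${\bf C}=0$ and $\varphi$ leaves it unchanged. Next, choose a pure-injective resolution $0\rt M\rt P^0\rt P^1\rt\cdots$ of $M$ in $\Mod A^{\op}$. Because finitely presented modules are projective with respect to pure exact sequences (\ref{Krause-Stov}), applying $\Hom_{A^{\op}}(-,-)$ termwise yields an exact complex $0\rt (-,M)\rt (-,P^0)\rt (-,P^1)\rt\cdots$ of functors; as each $P^i$ is pure-injective, each $(-,P^i)$ is cotorsion-flat by \ref{Equi-2}, so the triangle of \ref{Equi-3} identifies $\psi\varphi((-,M))$ with the complex $(-,\mathbf{P})$, where $\mathbf{P}=[P^0\rt P^1\rt\cdots]$, the acyclic term ${\bf G}$ being the cocone of the augmentation $(-,M)\rt(-,\mathbf{P})$. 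Finally $\K(U)^{-1}$ undoes the embedding $U$ and returns $\mathbf{P}$, so that $\Psi((-,M))=\mathbf{P}$ is a pure-injective resolution of $M$.

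It then remains to tensor. We have $\mu((-,M))=-\otimes_A\mathbf{P}$. Since the resolution $0\rt M\rt P^0\rt\cdots$ is pure exact, for every $X\in\mmod A$ the complex $0\rt X\otimes_A M\rt X\otimes_A P^0\rt\cdots$ is exact, so the augmentation $-\otimes_A M\rt -\otimes_A\mathbf{P}$ is a quasi-isomorphism in $\C(\Mod(\mmod A)^{\op})$. By the Gruson–Jensen description of injectives recalled in the proof of Lemma \ref{Prop-equi}, each $-\otimes_A P^i$ is injective, hence $-\otimes_A\mathbf{P}$ is an injective resolution that represents $-\otimes_A M$ in $\D(\Mod(\mmod A)^{\op})$; this gives $\mu((-,M))=-\otimes_A M$. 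For the morphism statement, a map $t\colon M\rt N$ lifts to a chain map $\mathbf{P}_M\rt\mathbf{P}_N$ of the chosen pure-injective resolutions, unique up to homotopy; by functoriality of $\varphi$, $\psi$ and $\K(U)^{-1}$ this chain map represents $\Psi((-,t))$, and tensoring identifies it with $-\otimes_A t$ under the two quasi-isomorphisms just produced.

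I expect the main obstacle to be the bookkeeping in identifying $\Psi((-,M))$: verifying that the cotorsion-flat replacement delivered by $\psi$ is genuinely the image under $\Hom_{A^{\op}}(-,-)$ of a pure-injective resolution of $M$, with no stray acyclic summand surviving, and that this complex is bounded below so that it really computes the derived image. The conceptual engine making everything collapse is the double role of purity: finitely presented modules are projective against pure exact sequences, which makes $\Hom_{A^{\op}}(-,-)$ exact on the resolution, while tensoring is exact on pure exact sequences, which makes $-\otimes_A-$ exact on it. Thus one and the same resolution simultaneously computes $\Psi((-,M))$ and an injective resolution of $-\otimes_A M$, which is exactly what forces $\mu((-,M))=-\otimes_A M$.
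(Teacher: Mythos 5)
Your argument is correct and follows essentially the same route as the paper: both identify $\Psi((-,M))$ with a pure-injective coresolution of $M$ via the triangle of \ref{Equi-3} (using that $\Hom$ out of finitely presented modules is exact on pure exact sequences, so the coaugmented complex becomes acyclic under Yoneda), and then use purity together with the Gruson--Jensen description of injectives to see that tensoring this coresolution produces an injective resolution of $-\otimes_A M$. The only cosmetic difference is the direction of construction --- the paper starts from an injective resolution of $-\otimes_A M$ in $\Mod(\mmod A)^{\op}$ and pulls it back to a pure-injective coresolution of $M$, whereas you build the pure-injective coresolution first and push it forward --- and these are interchangeable by Gruson--Jensen.
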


\begin{proof}
Pick $M \in \Mod A^{\op}$ and consider the functor $(-, M)$ as a complex concentrated in degree zero, which is a complex of flat functors. Take an injective resolution
$$0 \lrt -\otimes_A M \st{-\otimes_A \varepsilon} \lrt {\bf I}_{-\otimes_AM}$$ of $- \otimes_A M$.
By the characterization of injective objects of $\Mod (\mmod A)^{\op}$, ${\bf I}_{-\otimes_A M}$ is of the form
$${\bf I}_{-\otimes_A M}: \ \ 0 \lrt -\otimes_A C^1 \st{-\otimes d^1} \lrt -\otimes_A C^2 \st{-\otimes d^2} \lrt -\otimes_A C^3 \lrt \cdots,$$
such that for each $i \in \N$, $C^i$ is pure-injective \cite{GJ}. Let ${\bf F}$ be  the complex
$$ 0 \lrt M \st{\varepsilon}\lrt C^1 \st{d^1} \lrt C^2 \st{d^2} \lrt \cdots$$
which is pure-exact and ${\bf C}$ be the following complex of pure-injectives
$$ {\bf C}: \ \ 0 \lrt  C^1 \st{ d^1} \lrt  C^2 \st{ d^2} \lrt  C^3 \lrt \cdots.$$
Hence, there exists a degree-wise split exact sequence
$$0 \rt (-,{\bf C}) \rt (-,{\bf F}) \rt (-, { M}) \rt 0$$
of complexes in $\Mod(\mmod A^{\op})$. So, there is the following triangle in $\K(\CF(\mmod A))$
$$(-,{\bf F}) \rt (-, { M}) \rt (-,{\bf C})[1] \rightsquigarrow $$
with $(-, {\bf F})\in \K_{\ac}(\CF(\mmod A))$ and $(-, {\bf C}) \in \K(\CCF(\mmod A))$.
Therefore, by definition, $\Psi((-,M)) = {\bf C}[1]$. Hence, $\mu((-,M))= -\otimes_A {\bf C} [1]$, which is quasi-isomorphic to $-\otimes_A M$.

Finally, natural transformation $(-,t): (-,M) \lrt (-,N)$ induces the natural transformation $-\otimes_A t: -\otimes_A M \lrt - \otimes_A N$. This, in turn, can be lifted to their injective resolutions. Hence, $\mu$ takes the morphism  $(-,t)$ to the morphism
$$-\otimes_A t: -\otimes_A M \lrt -\otimes_A N,$$
as it was claimed.
\end{proof}
\vspace{0.2cm}

Consider the functor $\mathfrak{D} : \mmod (\mmod A^{\op})^{\op} \lrt \mmod (\mmod A)^{\op}$ given by $$(\mathfrak{D}F)(N) = \Hom(F, N \otimes_A -),$$ where $F \in \mmod (\mmod A^{\op})^{\op}$ and $N \in \mmod A$. This is a duality first considered by Auslander \cite{A-Stable} and then, independently, proved by Gruson and Jensen \cite{GJ}. It is known as the Auslander-Gruson-Jensen duality.

In the following, we intent to show that there is a close relationship between the equivalence $\mu : \D(\Mod (\mmod A^{\op})) \lrt \D(\Mod (\mmod A)^{\op})$ in Lemma \ref{Prop-equi} and the Auslander-Gruson-Jensen duality $\mathfrak{D}$.

\begin{proposition}\label{AusGJ-Dualtity}
There exists a fully faithful contravariant functor $$\zeta: \mmod (\mmod A^{\op})^{\op} \lrt \D(\Mod (\mmod A^{\op}))$$
that commutes the following diagram
\[ \xymatrix{ \D(\Mod ( \mmod A^{\op} )) \ar[r]^{\st{\mu} \sim}& \D(\Mod (\mmod A)^{\op})
\\
\mmod (\mmod A^{\op})^{\op} \ar[u]^{\zeta} \ar[r]^{\mathfrak{D}} &\mmod (\mmod A)^{\op},\ar@{^(->}[u] }\]
where $\mathfrak{D}$ is the Auslander-Gruson-Jensen duality.
\end{proposition}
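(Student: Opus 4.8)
The plan is to take $\zeta$ to be the composite $\mu^{-1}\circ\iota\circ\mathfrak{D}$, where $\iota$ denotes the canonical functor $\mmod(\mmod A)^{\op}\hookrightarrow\D(\Mod(\mmod A)^{\op})$ appearing as the right-hand vertical arrow. With this definition commutativity of the square is automatic, since $\mu\circ\zeta=\mu\circ\mu^{-1}\circ\iota\circ\mathfrak{D}\cong\iota\circ\mathfrak{D}$; the substance of the proof is therefore to render $\zeta$ explicit, so that its link with $\mathfrak{D}$ is transparent, and then to read off full faithfulness. To make $\zeta$ explicit I would first record, via the covariant Yoneda lemma, that $\mathfrak{D}$ carries a representable functor $(Y,-)$ to the flat functor $-\otimes_A Y$, and more generally sends a projective presentation $(X,-)\lrt(Y,-)\lrt F\lrt 0$ of a coherent $F$, coming from a map $t\colon Y\lrt X$ in $\mmod A^{\op}$, to the injective copresentation $0\lrt\mathfrak{D}F\lrt -\otimes_A Y\st{-\otimes_A t}\lrt -\otimes_A X$ in $\mmod(\mmod A)^{\op}$.

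Next I would invoke the Gruson--Jensen description of the injectives of $\Mod(\mmod A)^{\op}$ already used in Lemma \ref{Prop-equi}: every injective object has the form $-\otimes_A C$ with $C$ a pure-injective left $A$-module. Choosing an injective resolution $0\lrt\mathfrak{D}F\lrt -\otimes_A C^0\lrt -\otimes_A C^1\lrt\cdots$ with each $C^i$ pure-injective, the complex of flat functors $(-,C^0)\lrt(-,C^1)\lrt\cdots$ represents an object of $\D(\Mod(\mmod A^{\op}))$, and this is exactly $\zeta(F)$. Indeed, Lemma \ref{rem2-def} computes $\mu$ on each representable term as $\mu((-,C^i))=-\otimes_A C^i$ and correspondingly on the differentials, so $\mu$ carries this complex termwise to the injective resolution $-\otimes_A C^{\bullet}$ of $\mathfrak{D}F$; hence $\mu(\zeta(F))\cong\iota(\mathfrak{D}F)$, which gives the commutativity of the square directly rather than by fiat. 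On the projective generators the recipe specializes to $\zeta((Y,-))=(-,Y)$, matching $\mathfrak{D}(Y,-)=-\otimes_A Y$ under $\mu$.

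Finally, full faithfulness and contravariance follow formally. The functor $\mathfrak{D}$ is a duality, hence fully faithful and contravariant; the canonical functor $\iota$ is fully faithful because for coherent $M,N$ one has $(M,N)=\Hom_{\Mod(\mmod A)^{\op}}(M,N)=\Hom_{\D(\Mod(\mmod A)^{\op})}(M,N)$ (degree-zero complexes, with $\mmod(\mmod A)^{\op}$ a full subcategory); and $\mu$ is a triangle equivalence. Composing the induced isomorphisms,
$$(\zeta F,\zeta G)\cong(\mu\zeta F,\mu\zeta G)\cong(\mathfrak{D}F,\mathfrak{D}G)\cong(G,F),$$
where the first step uses that $\mu$ is an equivalence, the second uses $\mu\zeta\cong\iota\mathfrak{D}$ together with full faithfulness of $\iota$, and the last uses the duality $\mathfrak{D}$; the reversal $(G,F)$ records the contravariance. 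Thus $\zeta$ is a fully faithful contravariant functor, as required.

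The point demanding the most care is the passage from Lemma \ref{rem2-def}, which computes $\mu$ on a single flat functor and a single morphism, to its application to an entire (possibly unbounded) injective resolution: one must check that $\mu$ is genuinely computed termwise on the complex $(-,C^{\bullet})$, that the resulting complex is the image of $\zeta(F)$, and that the assignment is independent of the chosen resolution and functorial in $F$. All of this is controlled by the explicit realization of $\mu$ through the equivalence $\Psi$ of \ref{Equi} and the characterization of the injectives of $\Mod(\mmod A)^{\op}$, but it is the step where the bookkeeping is actually needed.
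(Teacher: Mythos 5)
Your proof is correct, but it runs in the opposite direction from the paper's. The paper first \emph{constructs} $\zeta$ explicitly: since $A$ is coherent, every coherent covariant functor $F$ admits a projective resolution of length two, $0 \to (M_2,-) \to (M_1,-) \to (M_0,-) \to F \to 0$, and $\zeta(F)$ is declared to be the bounded complex $(-,M_0) \to (-,M_1) \to (-,M_2)$ of representable (hence projective) objects; full faithfulness is then read off from the comparison theorem for projective resolutions, and commutativity of the square is \emph{verified} by writing $\zeta(F)$ as an iterated mapping cone and applying Lemma \ref{rem2-def} degree by degree, ending with the exact sequence obtained by applying $\mathfrak{D}$ to the resolution. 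You instead \emph{define} $\zeta:=\mu^{-1}\circ\iota\circ\mathfrak{D}$, so that commutativity and full faithfulness are formal consequences of $\mathfrak{D}$ being a duality, $\iota$ being fully faithful on degree-zero objects, and $\mu$ being a triangle equivalence; the substance is then pushed into identifying $\zeta(F)$ with the complex $(-,C^0)\to(-,C^1)\to\cdots$ coming from a pure-injective resolution of $\mathfrak{D}F$, which is justified by unwinding $\Psi$ exactly as in the proof of Lemma \ref{rem2-def} (your complex is already cotorsion-flat, so $\varphi$ and $\psi$ act trivially on it). Both arguments are sound, and you are right to flag the termwise computation of $\mu$ on an infinite complex as the point needing care. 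What the paper's route buys, and yours does not, is the explicit realization of $\zeta(F)$ as a \emph{bounded} complex of projectives --- i.e.\ a compact object of $\D(\Mod(\mmod A^{\op}))$ --- which is precisely the form exploited later in Theorem \ref{GrothDuality} and Proposition \ref{Comm-Tr}; your description produces an object of the derived category that is isomorphic to it but a priori an unbounded complex of flats, so if you wanted to feed your $\zeta$ into those later results you would still need the $\mathrm{pd}\le 2$ resolution argument. What your route buys is economy: no mapping-cone bookkeeping, and functoriality and independence of choices come for free from the composite definition rather than having to be checked for a resolution-dependent construction.
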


\begin{proof}
Let $F$ be an object of $\mmod (\mmod A^{\op})^{\op}$. By definition there is an exact sequence
$$ (\dag) \ \ \ \ 0 \lrt (M_2, -) \st{(d_2, -)} \lrt (M_1, -) \st{(d_1,-)} \lrt (M_0,-) \lrt F \lrt 0$$
with $M_i \in \mmod A^{\op}$, for $i \in \{0,1,2\}$.

We define $\zeta(F)$ to be the complex
\[\xymatrix@C=0.8cm@R=0.1cm{&&{\rm deg} 0 & {\rm deg} 1& {\rm deg} 2& &\\
\cdots \ar[r] & 0\ar[r] &(-,M_0) \ar[r]^{(-,d_1)}  & (-,M_1) \ar[r]^{(-,d_2)}  & (-,M_2) \ar[r] & 0 \ar[r] & \cdots.    }\]

Note that $\zeta(F)$ is a complex of projectives and one can easily check that $\zeta$ is a full and faithful functor.

Now, we compute the image of $\zeta(F)$ under the functor $\mu$.
First we show that $\mu$ maps the complex
\[\xymatrix@C=0.8cm@R=0.1cm{&& {\rm deg} 1& {\rm deg}2& &\\
\theta: \cdots \ar[r] & 0\ar[r]  & (-,M_1) \ar[r]^{(-,d_2)}  & (-,M_2) \ar[r] & 0 \ar[r] & \cdots    }\]
in $\D(\Mod (\mmod A^{\op}))$ to  the complex
\[\xymatrix@C=0.8cm@R=0.1cm{&&{\rm deg}1 & {\rm deg} 2&  &\\
\cdots \ar[r] & 0\ar[r]  & -\otimes_A M_1 \ar[r]^{-\otimes d_2}  & -\otimes M_2 \ar[r] & 0 \ar[r] & \cdots    }\]
in $\D(\Mod (\mmod A)^{\op})$. The complex $\theta$ is the mapping cone of the following morphisms of complexes
\[\xymatrix@C=0.8cm@R=0.1cm{&&  {\rm deg}2& &\\
\cdots \ar[r] & 0\ar[r]  & (-,M_1) \ar[ddd]^{(-,d_2)}  \ar[r] & 0 \ar[r] & \cdots  \\ \\ \\
 \cdots \ar[r] & 0\ar[r]  & (-,M_2)  \ar[r] & 0 \ar[r] & \cdots
 }\]
Since $\mu$ is a triangle functor, $\mu(\theta)$ is the mapping cone of the morphism
 \[\xymatrix@C=0.8cm@R=0.1cm{&&  {\rm deg}2& &\\
\cdots \ar[r] & 0\ar[r]  & \mu((-,M_1)) \ar[ddd]^{\mu((-,d_2))}  \ar[r] & 0 \ar[r] & \cdots  \\ \\ \\
 \cdots \ar[r] & 0\ar[r]  & \mu((-,M_2))  \ar[r] & 0 \ar[r] & \cdots.
 }\]
 By Lemma \ref{rem2-def}, the above diagram is isomorphic in $\D(\Mod (\mmod A)^{\op})$ to the diagram
 \[\xymatrix@C=0.8cm@R=0.1cm{&&  {\rm deg}2& &\\
\cdots \ar[r] & 0\ar[r]  & -\otimes_A M_1 \ar[ddd]^{-\otimes d_2}  \ar[r] & 0 \ar[r] & \cdots  \\ \\ \\
 \cdots \ar[r] & 0\ar[r]  & -\otimes_A M_2  \ar[r] & 0 \ar[r] & \cdots.
 }\]
 Thus, $\mu(\theta)$ is the complex
 \[\xymatrix@C=0.8cm@R=0.1cm{
\cdots \ar[r] & 0\ar[r]  & -\otimes_A M_1 \ar[r]^{-\otimes d_2}  & -\otimes_A M_2 \ar[r] & 0 \ar[r] & \cdots    }\]
with $-\otimes_A M_1$ at the $1$-th position.

Now, $\zeta(F)$ is the mapping cone of the following morphism of complexes
\[\xymatrix@C=0.8cm@R=0.1cm{&&  {\rm deg}1&& &\\
\cdots \ar[r] & 0\ar[r]  & (-, M_0) \ar[ddd]^{(-,d_1)}  \ar[r] & 0 \ar[r] & \cdots  &\\ \\ \\
 \cdots \ar[r] & 0\ar[r]  & (-, M_1)  \ar[r]^{(-,d_2)} & (-,M_2) \ar[r] & 0 \ar[r] &\cdots
 }\]
Hence, the same argument as above, applying this time to the above diagram, implies that $\mu(\zeta(F))$ is isomorphic in $\D(\Mod (\mmod A)^{\op})$ to the complex
\[\xymatrix@C=0.8cm@R=0.1cm{&& {\rm deg}0 &{\rm deg}1 & {\rm deg} 2&  &\\
\cdots \ar[r] & 0\ar[r]  & -\otimes_A M_0 \ar[r]^{-\otimes d_1} & -\otimes_A M_1 \ar[r]^{-\otimes d_2}  & -\otimes_A M_2 \ar[r] & 0 \ar[r] & \cdots.   }\]

Now, by applying $\mathfrak{D}$ to the exact sequence $(\dag)$, we have the following exact sequence
$$ 0 \lrt \mathfrak{D}F \lrt -\otimes_A M_0 \st{-\otimes d_1}\lrt - \otimes_A M_1 \st{-\otimes d_2}\lrt -\otimes_A M_2 \lrt 0.$$
Hence, if we consider $\mathfrak{D}F$ as a complex concentrated in degree zero, then $\mathfrak{D}F$ is quasi-isomorphic to $\mu(\zeta(F))$ in $\D(\Mod (\mmod A)^{\op})$. This completes the proof.
\end{proof}

\begin{remark}\label{ComGen}
It is known that for any ring $A$, the derived category $\D(\Mod A)$ is compactly generated. Moreover, the inclusion $\prj A \lrt \Mod A$ induces an equivalence between $\K^{\bb}(\prj A)$ and the full subcategory $\D(\Mod A)^{\rm c}$ of $\D(\Mod A)$ consisting of all compact objects \cite{Keller}. The same argument as in the ring case, implies that both $\D(\Mod (\mmod A^{\op}))$ and $\D(\Mod (\mmod A)^{\op})$ are compactly generated and
$$ \ \ \ \ \ \ \D(\Mod (\mmod A^{\op}))^{\rm c} \simeq \K^{\bb}(\prj (\Mod (\mmod A^{\op}))), \ \text{and}$$ $$\D(\Mod (\mmod A)^{\op})^{\rm c} \simeq \K^{\bb}(\prj (\Mod (\mmod A)^{\op})).$$
\end{remark}

\begin{theorem}\label{GrothDuality}
Let $A$ be a right and left coherent ring. There is the following duality of triangulated categories
$$\phi: \K^{\bb}(\mmod A^{\op}) \lrt \K^{\bb}(\mmod A).$$
\end{theorem}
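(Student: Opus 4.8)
The plan is to produce $\phi$ by restricting the derived equivalence $\mu$ of Lemma~\ref{Prop-equi} to compact objects and then translating those compact objects into bounded homotopy categories of finitely presented modules through the Yoneda embedding. Since a triangle equivalence preserves compactness and, by Remark~\ref{ComGen}, both $\D(\Mod(\mmod A^{\op}))$ and $\D(\Mod(\mmod A)^{\op})$ are compactly generated, the functor $\mu$ restricts to a triangle equivalence
$$\mu^{\rm c}: \D(\Mod(\mmod A^{\op}))^{\rm c} \st{\sim} \lrt \D(\Mod(\mmod A)^{\op})^{\rm c}$$
between their subcategories of compact objects. Invoking the two identifications of Remark~\ref{ComGen}, this reads as an equivalence
$$\K^{\bb}(\prj(\Mod(\mmod A^{\op}))) \st{\sim} \lrt \K^{\bb}(\prj(\Mod(\mmod A)^{\op})).$$

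Next I would identify the finitely generated projective functors on each side with modules. On the left, the Yoneda functor $M \mapsto (-,M)$ sends $\mmod A^{\op}$ onto the representable functors in $\Mod(\mmod A^{\op})$; as every finitely generated projective functor is a direct summand of a representable and $\mmod A^{\op}$ is idempotent complete (it is abelian because $A$ is left coherent), this gives $\mmod A^{\op} \simeq \prj(\Mod(\mmod A^{\op}))$ and hence $\K^{\bb}(\mmod A^{\op}) \simeq \K^{\bb}(\prj(\Mod(\mmod A^{\op})))$. The decisive point is the right-hand side: the objects of $\Mod(\mmod A)^{\op}$ are the \emph{covariant} functors on $\mmod A$, so the relevant representables are $(N,-)$ with $N \in \mmod A$, and the assignment $N \mapsto (N,-)$ is contravariant. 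Consequently Yoneda produces here a duality $\mmod A \simeq \prj(\Mod(\mmod A)^{\op})^{\op}$, that is, $\K^{\bb}(\prj(\Mod(\mmod A)^{\op})) \simeq \K^{\bb}(\mmod A)^{\op}$.

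Splicing the three equivalences together, $\phi$ is the resulting covariant equivalence $\K^{\bb}(\mmod A^{\op}) \lrt \K^{\bb}(\mmod A)^{\op}$, read as a contravariant equivalence, i.e. a duality $\phi: \K^{\bb}(\mmod A^{\op}) \lrt \K^{\bb}(\mmod A)$. The only real obstacle is the variance bookkeeping: one must check that the sole source of contravariance in the whole composite is the Yoneda embedding into the covariant functor category $\Mod(\mmod A)^{\op}$, so that the net effect is a duality and not a covariant equivalence, and that taking the opposite category commutes with forming $\K^{\bb}$ as expected. As a final consistency check and to describe $\phi$ concretely, I would use that $\mu$ sends $(-,M)$ to $-\otimes_A M$ (Lemma~\ref{rem2-def}) and that, via $\zeta$, it realizes the Auslander-Gruson-Jensen duality $\mathfrak{D}$ on coherent functors (Proposition~\ref{AusGJ-Dualtity}); this confirms that on modules $\phi$ coincides with the classical transpose-type duality and that the representables on both sides are matched correctly by $\mu^{\rm c}$.
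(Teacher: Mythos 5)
Your proposal is correct and follows essentially the same route as the paper: restrict $\mu$ to compact objects via Remark~\ref{ComGen}, then identify the bounded homotopy categories of finitely generated projective functors with $\K^{\bb}(\mmod A^{\op})$ and $\K^{\bb}(\mmod A)^{\op}$ through the Yoneda embeddings $u$ and $v$, the contravariance coming from the covariant functor category on the right. Your extra remarks on idempotent completeness and the consistency check against Lemma~\ref{rem2-def} and Proposition~\ref{AusGJ-Dualtity} only make explicit what the paper leaves implicit.
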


\begin{proof}
Since $\mu$ preserves direct sums, it preserves compact objects. So, by Remark \ref{ComGen}, it induces the equivalence
$$ \mu| : \K^{\bb}(\prj (\Mod (\mmod A^{\op}))) \lrt \K^{\bb}(\prj (\Mod (\mmod A)^{\op}))$$
of triangulated categories. Moreover, Yoneda functors $u: \mmod A^{\op} \lrt \Mod (\mmod A^{\op})$ and $v: \mmod A \lrt \Mod (\mmod A)^{\op}$ yield the following equivalences of triangulated categories
$$\ \ \ \ \ \ \bar{u}: \K^{\bb}(\mmod A^{\op}) \lrt \K^{\bb}(\prj (\Mod (\mmod A^{\op}))) , \ \text{and}$$
$$\bar{v}: \K^{\bb}(\mmod A)^{\op} \lrt \K^{\bb}(\prj (\Mod (\mmod A)^{\op})).$$

Consequently, we have the following commutative diagram whose rows are triangle equivalences
\begin{equation}\label{5.5}
 \xymatrix@C=0.1cm@R=0.3cm{ & \D(\Mod ( \mmod A^{\op}))  \ar[rr]^{\mu}  \ar@{<-}[dd] && \D(\Mod (\mmod A)^{\op}) \ar@{<-}[dd]\\ \\
& \K^{\bb}(\prj (\Mod (\mmod A^{\op}))) \ar[rr]^{{\mu|}  } \ar@{<-}[dd]^{\bar{u}} && \K^{\bb}(\prj (\Mod (\mmod A)^{\op})) \ar@{<-}[dd]^{\bar{v}} \\ \\
& \K^{\bb}(\mmod A^{\op})  && \K^{\bb}(\mmod A)^{\op}
}
\end{equation}
Therefore we get a duality $\phi: \K^{\bb}(\mmod A^{\op})  \lrt \K^{\bb}(\mmod A)$, as desired.
\end{proof}

\begin{corollary}\label{Cor-equi}
There is the following duality of triangulated categories
$$\bar{\phi}: \frac{\K^{\bb}(\mmod A^{\op})}{\K^{\bb}(\prj A^{\op})} \lrt \frac{\K^{\bb}(\mmod A)}{\K^{\bb}(\prj A)}.$$
\end{corollary}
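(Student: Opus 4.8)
The plan is to obtain $\bar\phi$ from the duality $\phi$ of Theorem~\ref{GrothDuality} by showing that $\phi$ restricts to a duality between the triangulated subcategories $\K^{\bb}(\prj A^{\op})$ and $\K^{\bb}(\prj A)$, and then invoking the universal property of Verdier quotients. Concretely, I would first record the general fact that a duality $F\colon \CT \lrt \CS$ that restricts to a duality between triangulated subcategories $\CT_0 \subseteq \CT$ and $\CS_0 \subseteq \CS$ descends to a duality $\CT/\CT_0 \lrt \CS/\CS_0$ on the Verdier quotients; applying this to $F = \phi$ with $\CT_0 = \K^{\bb}(\prj A^{\op})$ and $\CS_0 = \K^{\bb}(\prj A)$ reduces the corollary to the single claim that $\phi$ carries $\K^{\bb}(\prj A^{\op})$ onto $\K^{\bb}(\prj A)$.

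To verify this claim I would compute $\phi$ on a finitely generated projective module placed in degree zero, tracing it through the commutative diagram in the proof of Theorem~\ref{GrothDuality}. For $P \in \prj A^{\op}$, the Yoneda equivalence $\bar u$ sends $P$ to the representable functor $(-,P)$; by Lemma~\ref{rem2-def}, $\mu$ sends $(-,P)$ to $-\otimes_A P$; and the standard natural isomorphism $-\otimes_A P \cong \Hom_A(P^{\vee}, -) = (P^{\vee}, -)$ for finitely generated projective $P$, where $P^{\vee} = \Hom_A(P, A)$ is a finitely generated projective right $A$-module, identifies $-\otimes_A P$ with $v(P^{\vee})$. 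Hence $\bar v^{-1}$ returns $P^{\vee}$, so that on objects concentrated in a single degree $\phi$ coincides with the classical projective duality $(-)^{\vee} = \Hom_A(-, A)\colon \prj A^{\op} \lrt \prj A$.

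Since $\K^{\bb}(\prj A^{\op})$ is generated as a triangulated subcategory by the finitely generated projectives placed in degree zero and $\phi$ is a triangle functor, the previous computation shows $\phi(\K^{\bb}(\prj A^{\op})) \subseteq \K^{\bb}(\prj A)$; moreover, as $(-)^{\vee}$ is itself a duality $\prj A^{\op} \leftrightarrow \prj A$ (with quasi-inverse $\Hom_{A^{\op}}(-, A)$), its term-wise extension is a duality $\K^{\bb}(\prj A^{\op}) \lrt \K^{\bb}(\prj A)$ that agrees with the restriction of $\phi$, so this restriction is a duality. Feeding this into the Verdier-quotient principle of the first step yields $\bar\phi$. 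I expect the main obstacle to lie in the second step: one must keep careful track of the variances (remembering that $\bar v$ lands in $\K^{\bb}(\mmod A)^{\op}$, which is precisely what makes $\phi$ genuinely contravariant) and verify that the isomorphism $-\otimes_A P \cong (P^{\vee}, -)$ is natural in $P$, so that the identification of $\phi$ in degree zero with $\Hom_A(-, A)$ is compatible with morphisms and therefore extends over bounded complexes. Once that object-level identification is in place and recognized as the classical projective duality, everything else is formal.
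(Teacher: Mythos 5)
Your proposal is correct and follows essentially the same route as the paper: both reduce the corollary to showing that $\phi$ restricts to a duality $\K^{\bb}(\prj A^{\op}) \to \K^{\bb}(\prj A)$ by computing, via Lemma \ref{rem2-def} and the isomorphism $-\otimes_A P \cong (P^*,-)$, that $\phi$ acts on degree-zero projectives as $\Hom_A(-,A)$, and then extend to bounded complexes (the paper by induction on length, you by the equivalent observation that these objects generate the triangulated subcategory) before passing to Verdier quotients. Your explicit verification that the restriction is itself a duality (not merely that $\phi$ maps one subcategory into the other) is a point the paper leaves implicit, and is a welcome addition.
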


\begin{proof}
First we claim that the equivalence $\phi: \K^{\bb}(\mmod A^{\op}) \lrt \K^{\bb}(\mmod A)^{\op}$ can be restricted to the equivalence $ \phi| : \K^{\bb}(\prj A^{\op} )\lrt \K^{\bb}(\prj A)^{\op}$. Indeed, let $P$ be a finitely generated projective left $A$-module. Then by Lemma \ref{rem2-def}, $\mu((-,P)) = (- \otimes_A P)$ and $(-\otimes_A P) \cong (P^*,-) $. Hence, $(\phi|)(P)= P^*$ and so belongs to $\K^{\bb}(\prj A)$. So, using an induction argument on the length of the complexes of
$\K^{\bb}(\prj A^{\op})$ one can deduce that, the functor $\phi$ takes any bounded complex over $\prj A^{\op}$ to a bounded complex over $\prj A$. So, there is the following commutative diagram
\[\xymatrix{ \frac{\K^{\bb}(\mmod A^{\op})}{\K^{\bb}(\prj A^{\op})} \ar[r]^{\bar{\phi}} &  \frac{\K^{\bb}(\mmod A)}{\K^{\bb}(\prj A)} \\ \K^{\bb}(\mmod A^{\op}) \ar[r]^{\phi} \ar[u] & \K^{\bb}(\mmod A)\ar[u]\\
\K^{\bb}(\prj A^{\op} ) \ar[r]^{\phi|} \ar@{^(->}[u] & \K^{\bb}(\prj A), \ar@{^(->}[u]}\]
which implies the desired duality.
\end{proof}

\begin{remark}\label{KY}
Let $(S, \fm)$ be a commutative local complete Gorenstein $k$-algebra, where $k$ is an algebraically closed field. Let $M_0=S, M_1, ..., M_t$ be pairwise non-isomorphic indecomposable maximal Cohen-Macaulay $S$-modules. Set
$T :=\End_S(\bigoplus_{i=0}^tM_i).$ There is a fully faithful triangle functor $\K^\bb(\prj S) \lrt\D^\bb(\mmod T).$ By \cite[Definition 1.1]{KY}, the Verdier quotient
\[\frac{\D^\bb(\mmod T)}{\K^\bb(\prj S)}\]
is called the relative singularity category, denoted by $\Delta_S(T)$. This category recently has been studied in deep in \cite{KY}. We remark that, if $S$ is a self-injective algebra, then $\Delta_S(T)$ is equivalent to the quotient $\frac{\K^{\bb}(\mmod S)}{\K^{\bb}(\prj S)}.$ To see this, note that in this case all modules in $\mmod S$ are maximal Cohen-Macaulay, and so $T$ is the usual Auslander algebra of $S$, which is of finite global dimension, in fact, less than or equal to $2$.
\end{remark}

\section{Artin Algebras}\label{Artin Algebras}
In this section, we show that if $\La$ is an artin algebra of finite global dimension, then the duality introduced in Theorem \ref{GrothDuality}, restricts to a duality between their subcategories of acyclic complexes, $\K^{\bb}_{\ac}(\mmod \La^{\op})$ and $\K^{\bb}_{\ac}(\mmod \La).$

Throughout $\La$ is an artin $R$-algebra, where $R$ is a commutative artinian ring. We need some preparations.

\begin{lemma}\label{Lem-equi}
Let $\gamma: \K^{\bb}_{\ac}(\mmod \La) \lrt \frac{\K^{\bb}(\mmod \La)}{\K^{\bb}(\prj \La)} $ be the triangle functor taking every complex to itself. Then $\gamma$ is full and faithful. Furthermore, $\gamma$ is dense, and so is an equivalence, if and only if $\La$ has finite global dimension.
\end{lemma}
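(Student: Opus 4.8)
The functor $\gamma$ is the composite of the inclusion $\K^{\bb}_{\ac}(\mmod \La) \hookrightarrow \K^{\bb}(\mmod \La)$ with the Verdier localization $Q : \K^{\bb}(\mmod \La) \lrt \frac{\K^{\bb}(\mmod \La)}{\K^{\bb}(\prj \La)}$, so the statement is really a comparison between the thick subcategory of acyclic complexes and the thick subcategory $\K^{\bb}(\prj \La)$ being divided out. The plan is to get full faithfulness from an orthogonality relation valid for \emph{every} $\La$, and then treat density separately, where finiteness of the global dimension enters. The single input I would record first is the standard fact that a bounded complex of projectives $\mathbf{P}$ admits no nonzero homotopy classes of chain maps into an acyclic complex; that is, for $\mathbf{P} \in \K^{\bb}(\prj \La)$ and any acyclic $\mathbf{Z}$ one has $\Hom_{\K^{\bb}(\mmod \La)}(\mathbf{P}, \mathbf{Z}) = 0$, proved by a dévissage on the (bounded) length of $\mathbf{P}$ using exactness of $\Hom_{\La}(\text{projective}, -)$ together with exactness of $\mathbf{Z}$. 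Equivalently, every acyclic complex lies in the right orthogonal of $\K^{\bb}(\prj \La)$ inside $\K^{\bb}(\mmod \La)$.

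For full faithfulness I would invoke the fact that localization does not alter morphisms whose target is right-orthogonal to the inverted subcategory. Concretely, for $\mathbf{X}, \mathbf{Y} \in \K^{\bb}_{\ac}(\mmod \La)$ a morphism $\gamma(\mathbf{X}) \lrt \gamma(\mathbf{Y})$ is a roof $\mathbf{X} \st{s}{\longleftarrow} \mathbf{W} \lrt \mathbf{Y}$ with mapping cone $\mathbf{P} = \cone(s) \in \K^{\bb}(\prj \La)$. Applying $\Hom_{\K^{\bb}(\mmod \La)}(-, \mathbf{Y})$ to the triangle $\mathbf{W} \st{s}\lrt \mathbf{X} \lrt \mathbf{P} \rightsquigarrow$ and using $\Hom(\mathbf{P}, \mathbf{Y}) = 0 = \Hom(\mathbf{P}[-1], \mathbf{Y})$ (since $\mathbf{Y}$ is acyclic while $\mathbf{P}, \mathbf{P}[-1]$ are bounded complexes of projectives) shows that $s^{*}$ is an isomorphism, so every roof is equivalent to a genuine map $\mathbf{X} \lrt \mathbf{Y}$ and this assignment is a bijection. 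Hence $\Hom(\gamma \mathbf{X}, \gamma \mathbf{Y}) \cong \Hom_{\K^{\bb}(\mmod \La)}(\mathbf{X}, \mathbf{Y})$; note this step uses no hypothesis on $\gldim \La$.

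For density I would argue both implications. If $\gldim \La < \infty$, then every bounded complex $\mathbf{C}$ over $\mmod \La$ admits a \emph{bounded} projective resolution $\mathbf{P} \lrt \mathbf{C}$, the resolution terminating because projective dimensions are uniformly bounded; this yields a triangle $\mathbf{P} \lrt \mathbf{C} \lrt \mathbf{Z} \rightsquigarrow$ in $\K^{\bb}(\mmod \La)$ with $\mathbf{P} \in \K^{\bb}(\prj \La)$ and $\mathbf{Z}$ its (bounded, acyclic) cone. Since $\mathbf{P}$ becomes zero in the quotient, $\mathbf{C} \cong \mathbf{Z} = \gamma(\mathbf{Z})$ there, so $\gamma$ is dense and, combined with full faithfulness, an equivalence.

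The hard part will be the converse, that density forces $\gldim \La < \infty$, and this is where I expect the real obstacle. My plan is to manufacture an invariant that vanishes on the image of $\gamma$ but is nonzero on a module of infinite projective dimension. The canonical functor $\K^{\bb}(\mmod \La) \lrt \D^{\bb}(\mmod \La)$ sends $\K^{\bb}(\prj \La)$ to the perfect complexes, hence by the universal property descends to a triangle functor $\frac{\K^{\bb}(\mmod \La)}{\K^{\bb}(\prj \La)} \lrt \D_{\sg}(\La) = \frac{\D^{\bb}(\mmod \La)}{\K^{\bb}(\prj \La)}$ onto the singularity category, and it annihilates $\gamma(\K^{\bb}_{\ac}(\mmod \La))$ because acyclic complexes are already zero in $\D^{\bb}(\mmod \La)$. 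If $\gldim \La = \infty$, pick $M$ with $\pd_{\La} M = \infty$; its stalk is nonzero in $\D_{\sg}(\La)$ (a module vanishes there exactly when it is perfect), so $M$ cannot be isomorphic in the quotient to any object of $\gamma(\K^{\bb}_{\ac}(\mmod \La))$, and $\gamma$ fails to be dense. Running this over all simple modules shows that density of $\gamma$ forces each simple to have finite projective dimension, whence $\gldim \La < \infty$. The two points needing the most care are the clean statement of the localization lemma used for full faithfulness and the verification that the functor to $\D_{\sg}(\La)$ is well defined and genuinely detects infinite projective dimension.
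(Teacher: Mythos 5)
Your proof is correct. For full faithfulness and for the forward direction of density it takes essentially the paper's route: the orthogonality $\Hom_{\K^{\bb}(\mmod \La)}(\mathbf{P},\mathbf{Z})=0$ for $\mathbf{P}\in\K^{\bb}(\prj \La)$ and $\mathbf{Z}$ acyclic is exactly what the paper uses, except that the paper writes it out as explicit roof/diagram manipulations where you invoke the standard localization lemma (morphisms into an object right-orthogonal to the inverted subcategory are unchanged by localization); and the cone-of-a-finite-projective-resolution argument for density is the paper's as well --- you are in fact slightly more complete here, since you resolve an arbitrary bounded complex while the paper resolves only stalk complexes and leaves the d\'evissage implicit. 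The one point of genuine divergence is the converse. The paper unwinds the roof $S \leftarrow \mathbf{Z} \rightarrow \X$ by hand: since $\X$ is acyclic, $\mathbf{Z}$ is quasi-isomorphic to a bounded complex of projectives, and the triangle $\mathbf{Z}\rightarrow S\rightarrow \cone(s)$ then forces $S$ to be perfect in $\D^{\bb}(\mmod \La)$. You instead factor the quotient through the singularity category $\D_{\sg}(\La)$, note that acyclic complexes die there, and use that a module vanishes in $\D_{\sg}(\La)$ precisely when it is perfect. The mathematical content is the same, but your packaging is more conceptual and reusable, while the paper's is more elementary and self-contained; both correctly reduce the converse to the finiteness of the projective dimensions of the simple modules.
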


\begin{proof}
Let $f: \X \lrt \Y$ be a morphism in $\K^{\bb}_{\ac}(\mmod \La)$ such that $\gamma(f)$ vanishes in $\frac{\K^{\bb}(\mmod \La)}{\K^{\bb}(\prj \La)}$. So, there is a complex ${\bf Z} \in \K^{\bb}(\mmod \La)$ together with a morphism $s: {\bf Z} \lrt \X$ such that ${\rm cone}(s) \in \K^{\bb}(\prj \La)$ and $f \circ s$ is null homotopic. Hence, there is a morphism $t: {\rm cone}(s) \lrt \Y$ making the following diagram commutative
\[ \xymatrix{ {\bf Z} \ar[r]^s & \X \ar[r]^u \ar[d]^f & {\rm cone}(s) \ar@{.>}[dl]^t \ar@{~>}[r] &\\
& \Y &}\]
Since ${\rm cone}(s) \in \K^{\bb}(\prj \La)$ and $\Y$ is an acyclic complex, one can deduce that $t$, and so $f$, is null homotopic. Hence, $\gamma$ is faithful.
Assume that $\X \st{s}\leftarrow \BZ \st{f} \rt \Y$ is a roof in $\frac{\K^{\bb}(\mmod \La)}{\K^{\bb}(\prj \La)}$ with $\X$ and $\Y$ in $\K^{\bb}_{\ac}(\mmod \La)$. Consider a triangle ${\bf Z} \st{s} \rt \X \st{u} \rt {\rm cone}(s) \rightsquigarrow$ and then the composition
 $$ {\rm cone}(s)[-1] \st{u[-1]}\rt \BZ \st{f} \rt \Y.$$ Since  ${\rm cone}(s)\in \K^{\bb}(\prj \La)$ and $\Y$ is acyclic, the above morphism is null homotopic. So, there is a morphism $t: \X \lrt \Y$ making the following diagram commutative
\[ \xymatrix{ {\rm cone}(s)[-1] \ar[r] & \BZ \ar[r]^s \ar[d]^f & \X \ar@{~>}[r] \ar@{.>}[dl]^t &\\
& \Y & }\]
Now, the following commutative diagram implies that the roof $f \circ s^{-1}$ is equivalent  to a roof $t\circ {\rm id}^{-1}$ in  $\frac{\K^{\bb}(\mmod \La)}{\K^{\bb}(\prj \La)}$
 \[\xymatrix@C-0.8pc@R-0.5pc{ & & \BZ \ar[dl]_{\rm id} \ar[dr]^{s} & & \\
 & \BZ \ar[drrr]^{f}  \ar[dl]_s && \X \ar[dlll]_{\rm id} \ar[dr]^t &
 \\ \X & & & & \Y }\]
This means that $\gamma$ is full.

For the last part of the statement, let $\La$ has finite global dimension and $M$ be a finitely presented $\La$-module. Take a finite projective resolution $\PP_M \st{\pi} \lrt M$ of $M$. The mapping cone ${\rm cone}(\pi)$ belongs to $\K^{\bb}_{\ac}(\mmod \La)$ and is isomorphic to $M$ in $\frac{\K^{\bb}(\mmod \La)}{\K^{\bb}(\prj \La)}$. Therefore, $\gamma$ is dense and hence is an equivalence of triangulated categories.

For the converse, let $S$ be a simple $\La$-module. Since $\gamma$ is dense, there is a bounded acyclic complex $\X \in \K^{\bb}_{\ac}(\mmod \La)$ which is isomorphic to $S$ in $\frac{\K^{\bb}(\mmod \La)}{\K^{\bb}(\prj \La)}$. Hence, we have a roof $\xymatrix{ S & {\bf Z} \ar[r]^q \ar[l]_s & \X}$ such that ${\rm cone}(s)$ and ${\rm cone}(q)$ belong to $\K^{\bb}(\prj \La)$. As $\X$ is an acyclic complex, a triangle $ {\bf Z} \st{q}\rt \X \rt {\rm cone}(q) \rightsquigarrow$ implies that $\Z$ is quasi-isomorphic to ${\rm cone}(q)$.
Now, consider the image of a triangle ${\bf Z} \st{s} \rt S \rt {\rm cone}(s) \rightsquigarrow$ in $\D^{\bb}(\mmod \La)$. Since ${\bf Z}$ and ${\rm cone}(s)$ are isomorphic in $\D^{\bb}(\mmod \La)$ to bounded complexes of finitely generated projective $\La$-modules, so is $S$. This means that $S$ has finite projective dimension. The proof is now complete.
\end{proof}

The argument in the proof of the above lemma carry over verbatim to yield the following lemma.

\begin{lemma}\label{Lem-equi2}
Let $\gamma': \K^{\bb}_{\ac}(\mmod \La) \lrt \frac{\K^{\bb}(\mmod \La)}{\K^{\bb}(\inj \La)} $ be a triangle functor taking every complex to itself. Then $\gamma'$ is full and faithful. Furthermore, $\gamma'$ is dense, and so is an equivalence, if and only if $\La$ has finite global dimension.
\end{lemma}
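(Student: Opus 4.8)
The plan is to run the three-part argument of Lemma \ref{Lem-equi} with projectives replaced by injectives, the crucial point being that the fractions must be reoriented. The only homological input that changes is the vanishing $\Hom_{\K^{\bb}(\mmod \La)}(\X, {\bf I}) = 0$ whenever $\X$ is acyclic and ${\bf I} \in \K^{\bb}(\inj \La)$: indeed, for a bounded complex of injectives the natural map $\Hom_{\K}(\X, {\bf I}) \to \Hom_{\D}(\X, {\bf I})$ is an isomorphism, and the target vanishes since $\X \simeq 0$ in $\D^{\bb}(\mmod \La)$. This is the mirror of the fact $\Hom_{\K}({\bf P}, \Y) = 0$ (${\bf P}$ bounded projective, $\Y$ acyclic) used before; note that it puts the injective complex in the \emph{target} slot, which is why I will use post-composition fractions where Lemma \ref{Lem-equi} used pre-composition fractions.

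For faithfulness, I would take $f : \X \to \Y$ in $\K^{\bb}_{\ac}(\mmod \La)$ with $\gamma'(f) = 0$ and invoke the (right) fraction description of vanishing: there is $s : \Y \to {\bf W}$ with $\cone(s) \in \K^{\bb}(\inj \La)$ and $s \circ f$ null-homotopic. Rotating the triangle $\Y \st{s}\rt {\bf W} \rt \cone(s) \rightsquigarrow$ to $\cone(s)[-1] \st{w}\rt \Y \st{s}\rt {\bf W}$ and using $s f \sim 0$, the map $f$ factors as $f = w \circ g$ for some $g : \X \to \cone(s)[-1]$. Since $\X$ is acyclic and $\cone(s)[-1] \in \K^{\bb}(\inj \La)$, the vanishing above forces $g$, hence $f$, to be null-homotopic. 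Fullness is symmetric: a morphism $\X \to \Y$ in the quotient is a right fraction $\X \st{f}\rt {\bf W} \st{s}\leftarrow \Y$ with $\cone(s) \in \K^{\bb}(\inj \La)$; composing $f$ with ${\bf W} \to \cone(s)$ gives a map out of the acyclic $\X$ into a bounded complex of injectives, so it is zero, $f$ therefore factors through $s$, and the fraction becomes the image under $\gamma'$ of a genuine morphism of acyclic complexes.

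For density I would argue exactly as before, with injective coresolutions in place of projective resolutions. If $\gldim \La < \infty$, every $M \in \mmod \La$ has a finite coresolution $M \st{\iota}\rt {\bf I}_M$ by finitely generated injectives, and $\cone(\iota) \in \K^{\bb}_{\ac}(\mmod \La)$ is isomorphic to $M$ in $\frac{\K^{\bb}(\mmod \La)}{\K^{\bb}(\inj \La)}$, so $\gamma'$ is dense. Conversely, for a simple module $S$, density yields an acyclic $\X$ isomorphic to $S$ in the quotient, hence a roof $S \st{s}\leftarrow \BZ \st{q}\rt \X$ with $\cone(s), \cone(q) \in \K^{\bb}(\inj \La)$; acyclicity of $\X$ and the two triangles force $S$ to be isomorphic in $\D^{\bb}(\mmod \La)$ to a bounded complex of finitely generated injectives, i.e. $\id S < \infty$. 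As $\La$ has only finitely many simples and $\gldim \La = \sup\{\id S : S \text{ simple}\}$, this gives $\gldim \La < \infty$.

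The routine parts are the two fraction diagrams; the step I expect to require the most care is the reorientation of the fractions, since a literal copy of Lemma \ref{Lem-equi} would demand the false vanishing $\Hom_{\K}({\bf I}, \Y) = 0$ for ${\bf I}$ bounded injective and $\Y$ acyclic. A clean way to avoid re-deriving everything is to apply the duality $D = \Hom_R(-, E)$ on $\mmod \La$: it is exact, exchanges $\prj \La$ with $\inj \La^{\op}$ and preserves acyclicity and simples, so it identifies $\gamma'$ with the opposite of the functor $\gamma$ of Lemma \ref{Lem-equi} for $\La^{\op}$. Since $\gldim \La = \gldim \La^{\op}$ for an artin algebra, Lemma \ref{Lem-equi} applied to $\La^{\op}$ then yields the statement directly.
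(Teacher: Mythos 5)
Your proposal is correct and is essentially the paper's own proof: the paper merely asserts that the argument of Lemma \ref{Lem-equi} carries over verbatim, and what you have written is that dualization carried out in detail, with the (correct) observation that the fractions must be reoriented so that the key vanishing becomes $\Hom_{\K^{\bb}(\mmod \La)}(\X,{\bf I})=0$ for $\X$ acyclic and ${\bf I}\in\K^{\bb}(\inj \La)$. Your closing remark that one can instead deduce the statement from Lemma \ref{Lem-equi} applied to $\La^{\op}$ via the duality $D$ is also valid and is arguably the cleanest justification of the paper's ``verbatim'' claim.
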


\begin{proposition}
Let $\La$ be an artin algebra of finite global dimension. Then there is the following duality of triangulated categories
$$ \K^{\bb}_{\ac}(\mmod \La^{\op}) \simeq \K^{\bb}_{\ac}(\mmod \La).$$
\end{proposition}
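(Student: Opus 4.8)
The plan is to deduce the duality formally from the quotient-level duality of Corollary \ref{Cor-equi}, after identifying each of the two categories of acyclic complexes with the corresponding Verdier quotient by means of Lemma \ref{Lem-equi}. The finite global dimension hypothesis enters precisely to make that identification available on both sides.

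First I would record the inputs. Since $\La$ is an artin $R$-algebra, both $\La$ and $\La^{\op}$ are artin algebras and in particular (left and right) coherent, so Theorem \ref{GrothDuality} and Corollary \ref{Cor-equi} apply. Moreover finite global dimension is left–right symmetric for artin algebras, i.e. $\gldim \La = \gldim \La^{\op}$, so $\La^{\op}$ also has finite global dimension. Consequently Lemma \ref{Lem-equi}, applied to $\La$ and to $\La^{\op}$ separately, supplies two triangle equivalences
$$\gamma \colon \K^{\bb}_{\ac}(\mmod \La^{\op}) \st{\sim}\lrt \frac{\K^{\bb}(\mmod \La^{\op})}{\K^{\bb}(\prj \La^{\op})}, \qquad \gamma \colon \K^{\bb}_{\ac}(\mmod \La) \st{\sim}\lrt \frac{\K^{\bb}(\mmod \La)}{\K^{\bb}(\prj \La)}.$$

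Next I would compose these with the duality $\bar{\phi}$ of Corollary \ref{Cor-equi}. Explicitly, the desired functor is the composite
$$\K^{\bb}_{\ac}(\mmod \La^{\op}) \st{\gamma}\lrt \frac{\K^{\bb}(\mmod \La^{\op})}{\K^{\bb}(\prj \La^{\op})} \st{\bar{\phi}}\lrt \frac{\K^{\bb}(\mmod \La)}{\K^{\bb}(\prj \La)} \st{\gamma^{-1}}\lrt \K^{\bb}_{\ac}(\mmod \La),$$
where $\gamma^{-1}$ denotes a quasi-inverse of the second equivalence. Since the two outer functors are covariant triangle equivalences and the middle functor $\bar{\phi}$ is a contravariant triangle equivalence, the composite is again a contravariant triangle equivalence, that is, a duality of triangulated categories, which is exactly the assertion.

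The main point to watch is not any single hard step but the bookkeeping that ties the pieces together: that finite global dimension passes to $\La^{\op}$ (so Lemma \ref{Lem-equi} is genuinely available on both sides), and that composing a duality with two equivalences again yields a duality. The substantive work has all been done upstream — the construction of $\mu$ in Lemma \ref{Prop-equi}, its transfer to the bounded homotopy categories in Theorem \ref{GrothDuality}, the restriction to projectives producing $\bar{\phi}$ in Corollary \ref{Cor-equi}, and, crucially, the identification of $\K^{\bb}_{\ac}$ with the singularity-type quotient in Lemma \ref{Lem-equi}, which is where finite global dimension is indispensable. I note that Lemma \ref{Lem-equi2} offers an equally valid alternative identification through the injective quotient, but it is not needed for this argument.
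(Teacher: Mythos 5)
Your proposal is correct and matches the paper's own argument, which simply cites Corollary \ref{Cor-equi} together with Lemma \ref{Lem-equi}; you have merely made explicit the details the paper leaves implicit (applying Lemma \ref{Lem-equi} to both $\La$ and $\La^{\op}$ via the left--right symmetry of finite global dimension, and composing the two equivalences with the duality $\bar{\phi}$).
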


\begin{proof}
Corollary \ref{Cor-equi} in conjunction with Lemma \ref{Lem-equi} imply the result.
\end{proof}

\begin{lemma}\label{Lem-embed}
Let $\La$ be an artin algebra of finite global dimension. Then there is a full and faithful functor
$$ \la: \underline{\rm mod} \mbox{-}\La \lrt \frac{\K^{\bb}(\mmod \La)}{\K^{\bb}(\prj \La)}$$
taking every $\La$-module $M$ to itself.
\end{lemma}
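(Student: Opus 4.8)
The plan is to realize $\la$ as the composite of the stalk embedding $\mmod\La\lrt\K^{\bb}(\mmod\La)$ (sending $M$ to the complex concentrated in degree $0$) with the Verdier quotient functor $Q:\K^{\bb}(\mmod\La)\lrt\frac{\K^{\bb}(\mmod\La)}{\K^{\bb}(\prj\La)}$, and then to compute the induced map on $\Hom$-groups \emph{exactly}. First I would check that this composite factors through the projectively stable category $\underline{\rm mod}\mbox{-}\La$: if a homomorphism $f:M\to N$ factors through a projective $P$, then the stalk map of $f$ factors through the stalk complex of $P$, which lies in $\K^{\bb}(\prj\La)$ and is therefore a zero object of the quotient, so $Q(f)=0$. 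This makes $\la$ a well-defined additive functor taking $M$ to itself, and the whole content becomes the bijectivity, for all $M,N$, of the natural map $\underline{\Hom}_{\La}(M,N)\to\Hom_{\frac{\K^{\bb}(\mmod\La)}{\K^{\bb}(\prj\La)}}(M,N)$, where $\underline{\Hom}_{\La}$ denotes homomorphisms modulo those factoring through a projective.

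To compute the right-hand group I would pass through the equivalence $\gamma:\K^{\bb}_{\ac}(\mmod\La)\xrightarrow{\sim}\frac{\K^{\bb}(\mmod\La)}{\K^{\bb}(\prj\La)}$ of Lemma \ref{Lem-equi}. To each module $M$ I attach the acyclic complex $\tilde M:=\cone(\pi_M)$, where $\pi_M:\PP_M\to M$ is a \emph{finite} projective resolution (finite because $\gldim\La<\infty$); explicitly $\tilde M$ is the bounded exact complex $\cdots\to P_1\to P_0\to M\to 0$ with $M$ in degree $0$. Exactly as in the density argument of Lemma \ref{Lem-equi}, the triangle $\PP_M\to M\to\tilde M\rightsquigarrow$ shows $\gamma(\tilde M)\cong M$ in the quotient, so it suffices to compute $\Hom_{\K^{\bb}_{\ac}(\mmod\La)}(\tilde M,\tilde N)=\Hom_{\K^{\bb}(\mmod\La)}(\tilde M,\tilde N)$.

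Next I would collapse $\tilde M$ to its degree-$0$ stalk. Applying $\Hom_{\K^{\bb}(\mmod\La)}(-,\tilde N)$ to the triangle $M\to\tilde M\to\PP_M[1]\rightsquigarrow$ produces a long exact sequence whose outer terms are the groups $\Hom_{\K}(\PP_M,\tilde N[j])$. Since $\PP_M$ is a bounded complex of projectives and $\tilde N$ is acyclic, every such group vanishes (a chain map from a bounded complex of projectives to an acyclic complex is null-homotopic). Hence $\Hom_{\K}(\tilde M,\tilde N)\cong\Hom_{\K}(M,\tilde N)$. A chain map from the stalk $M$ to $\tilde N$ is nothing but a module map $M\to\tilde N^0=N$, and two such maps are homotopic precisely when their difference has the form $\varepsilon_N\circ h$ with $h:M\to Q_0$ and $\varepsilon_N:Q_0\to N$ the augmentation of $\PP_N$; because $\varepsilon_N$ is a surjection from a projective, a map $M\to N$ factors through $\varepsilon_N$ if and only if it factors through some projective. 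Therefore $\Hom_{\K}(M,\tilde N)\cong\underline{\Hom}_{\La}(M,N)$.

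Finally I would verify that the resulting bijection agrees with the map induced by $\la$. The degree-$0$ inclusion $a:N\to\tilde N$ is a chain map that becomes invertible in the quotient (its cone $\PP_N[1]$ lies in $\K^{\bb}(\prj\La)$), and for a module map $f:M\to N$ the chain map $M\to\tilde N$ attached to it is simply $a\circ f$; hence, transporting along $\gamma$ and $Q(a)$, the isomorphism carries $f$ to $Q(f)=\la(f)$. This yields fullness and faithfulness at once. I expect the main obstacle to be precisely this middle computation: correctly matching the homotopy relation on chain maps $M\to\tilde N$ with the relation ``factors through a projective,'' and establishing the vanishing $\Hom_{\K}(\PP_M,\tilde N[j])=0$ that collapses $\tilde M$ to its stalk. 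The bookkeeping of which model ($\K^{\bb}_{\ac}$ versus the quotient) carries each object, together with the final compatibility check with $\la$, is the part most prone to sign and degree slips.
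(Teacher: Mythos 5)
Your proposal is correct and follows essentially the same route as the paper: reduce via the equivalence $\gamma$ of Lemma \ref{Lem-equi} to the functor sending $M$ to the mapping cone of a finite projective resolution, and verify fullness and faithfulness there. The paper dismisses that verification with ``follows easily from the properties of projective resolutions''; your explicit computation identifying $\Hom_{\K^{\bb}(\mmod \La)}(\tilde{M},\tilde{N})$ with $\underline{\Hom}_{\La}(M,N)$, together with the compatibility check against the quotient functor, supplies exactly the omitted details.
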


\begin{proof}
By Lemma \ref{Lem-equi}, we have the following equivalence of triangulated categories
$$\K^{\bb}_{\ac}(\mmod \La) \st{\gamma}\lrt \frac{\K^{\bb}(\mmod \La)}{\K^{\bb}(\prj \La)}.$$
So, it is enough to show that the functor
$$\underline{\rm mod} \mbox{-} \La \lrt \K_{\ac}^{\bb}(\mmod \La)$$
mapping every module $M$ in $\underline{\rm mod}\mbox{-} \La$ to the mapping cone of a projective resolution $\PP_M \lrt M$ of $M$, is full and faithful. This follows easily from the properties of projective resolutions.
Hence, the functor $\la$ which is the composition of the above two functors is also full and faithful.
\end{proof}

\begin{lemma}\label{Lem-embed2}
Let $\La$ be an artin algebra of finite global dimension. Then there is a full and faithful functor
$$\la': {\overline{{\rm mod}}}{\mbox{-}}\La \lrt \frac{\K^{\bb}(\mmod \La)}{\K^{\bb}(\inj \La)}$$
taking every $\La$-module $M$ to itself.
\end{lemma}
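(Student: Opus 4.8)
The plan is to dualize the proof of Lemma \ref{Lem-embed} verbatim, replacing projective resolutions by injective coresolutions and invoking Lemma \ref{Lem-equi2} in place of Lemma \ref{Lem-equi}. Since $\La$ has finite global dimension, Lemma \ref{Lem-equi2} provides an equivalence of triangulated categories
$$\gamma': \K^{\bb}_{\ac}(\mmod \La) \lrt \frac{\K^{\bb}(\mmod \La)}{\K^{\bb}(\inj \La)}.$$
Hence it suffices to construct a full and faithful functor $\rho: {\overline{{\rm mod}}}{\mbox{-}}\La \lrt \K^{\bb}_{\ac}(\mmod \La)$ and to check that $\gamma' \circ \rho$ sends each $\La$-module to a complex isomorphic in the quotient to itself; then $\la' := \gamma' \circ \rho$ will be the desired functor.

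First I would construct $\rho$. For $M \in \mmod \La$ choose a finite injective coresolution $0 \lrt M \st{\ve}\lrt \mathbf{I}_M$, where $\mathbf{I}_M$ is the bounded complex $0 \lrt I^0 \lrt I^1 \lrt \cdots \lrt I^n \lrt 0$. Such a bounded coresolution exists because $\La$ has finite global dimension, and each $I^j$ lies in $\inj \La$ because over an artin algebra the injective envelope of a finitely presented module is again finitely presented. The map $\ve: M \to \mathbf{I}_M$ is a quasi-isomorphism, so I would set $\rho(M) := \cone(\ve)[-1]$, which is the bounded acyclic complex $0 \lrt M \lrt I^0 \lrt \cdots \lrt I^n \lrt 0$ with $M$ in degree $0$, hence an object of $\K^{\bb}_{\ac}(\mmod \La)$. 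On morphisms, $\rho$ is defined by lifting a map $M \to N$ to a map of coresolutions, which exists and is unique up to homotopy by the comparison theorem for injective coresolutions, exactly dual to the projective case.

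Next I would verify that $\rho$ is full and faithful, that is,
$$\Hom_{\K^{\bb}_{\ac}(\mmod \La)}(\rho(M), \rho(N)) \cong \overline{\Hom}_{\La}(M, N),$$
the module homomorphisms modulo those factoring through an injective. This is the precise dual of the identification used for projective resolutions in Lemma \ref{Lem-embed}, and it rests on the two standard facts that every $M \to N$ induces a chain map of coresolutions, and that such a chain map is null-homotopic exactly when the underlying map of modules factors through an injective module. Finally, the rotated triangle $\cone(\ve)[-1] \lrt M \lrt \mathbf{I}_M \rightsquigarrow$, together with $\mathbf{I}_M \in \K^{\bb}(\inj \La)$, shows that $\gamma'(\rho(M)) = \rho(M) \cong M$ in $\frac{\K^{\bb}(\mmod \La)}{\K^{\bb}(\inj \La)}$, so that $\la' = \gamma' \circ \rho$ indeed takes every module to itself.

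The genuinely substantive step is the full-faithfulness in the paragraph above, namely matching morphisms in the acyclic homotopy category between the two coresolution cones with stable-modulo-injective homomorphisms; everything else is bookkeeping and a faithful transcription of Lemma \ref{Lem-embed}, the only algebra-specific input being that finite global dimension together with the artinian hypothesis guarantees \emph{bounded} injective coresolutions lying entirely inside $\mmod \La$. As an alternative, one could deduce the statement from Lemma \ref{Lem-embed} applied to $\La^{\op}$ by transporting along the Matlis duality $D : \mmod \La \to \mmod \La^{\op}$, which interchanges $\prj \La$ with $\inj \La^{\op}$; I would prefer the direct dualization above since it avoids tracking the opposite-category conventions.
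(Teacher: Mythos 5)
Your proposal is correct and follows the same route as the paper, which likewise defines the functor by sending $M$ to the (shifted) mapping cone of a finite injective coresolution, checks full faithfulness via the standard comparison/homotopy facts for coresolutions, and composes with the equivalence of Lemma \ref{Lem-equi2}; the paper merely sketches this and "skips the details," which you have filled in accurately. The only cosmetic difference is your shift convention $\cone(\ve)[-1]$ versus the paper's unshifted cone, which is immaterial.
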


\begin{proof}
The proof is similar to the proof of the above lemma. Just one should consider the functor $\varsigma: {\overline{{\rm mod}}}{\mbox{-}}\La \lrt \K^{\bb}_{\ac}(\mmod \La)$ which maps every module $M$ to the mapping cone of an injective resolution $M \lrt {\bf E}_M$ of $M$, show that $\varsigma$ is full and faithful and then apply Lemma \ref{Lem-equi2}. We skip the details.
\end{proof}

Let $P_0 \st{f} \lrt P_1 \lrt M \lrt 0$ be a projective presentation of the $A$-module $M$. Recall that the Auslander transpose of $M$, ${\rm Tr} M$, is defined to be the $A^{\op}$-module ${\rm Coker}(\Hom_A(f,A))$. It is known that ${\rm Tr} M$ is unique up to projective equivalences and so induces an equivalence
$${\rm Tr}: {\underline{{\rm mod}}}{\mbox{-}} A^{\op} \lrt ({\underline{{\rm mod}}}{\mbox{-}} A)^{\op}$$
of stable categories.

\begin{proposition}\label{Comm-Tr}
Let $\La$ be an artin algebra of finite global dimension. Then there is the following commutative diagram
\[\xymatrix{ \frac{\K^{\bb}(\mmod \La^{\op})}{\K^{\bb}(\prj \La^{\op})}  \ar[r]^{\bar{\phi}[-2]} & \frac{\K^{\bb}(\mmod \La)}{\K^{\bb}(\prj \La)} \\
 {\underline{\rm mod}}{\mbox{-}}\La^{\op} \ar[u]_{\la} \ar[r]^{\rm Tr}   &  {\underline{\rm mod}}{\mbox{-}}\La \ar[u]^{\la},} \]
such that rows are dualities and $\la$ is the functor defined in Lemma \ref{Lem-embed}.
\end{proposition}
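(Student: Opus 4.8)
The plan is to reduce the asserted commutativity to an object-level computation of the duality $\phi$ of Theorem \ref{GrothDuality} on a single module, and then to read off the answer in the Verdier quotient. Fix $M \in \underline{\rm mod}\mbox{-}\La^{\op}$ together with a projective presentation $Q_1 \st{g}\lrt Q_0 \lrt M \lrt 0$ of $M$ as a left $\La$-module, so that by definition ${\rm Tr}M = \Coker(g^{*})$, where $g^{*} = \Hom_\La(g,\La)\colon Q_0^{*}\lrt Q_1^{*}$. Since $\la(M)$ is represented in $\frac{\K^{\bb}(\mmod \La^{\op})}{\K^{\bb}(\prj \La^{\op})}$ by the stalk complex $M$, it is enough to compute $\phi(M)$ with $M$ viewed as a complex concentrated in degree zero and then to pass to the quotient.

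First I would run $M$ through the construction of $\phi$ recorded in diagram \eqref{5.5}. The Yoneda functor $\bar{u}$ sends $M$ to the flat functor $(-,M)$, and by Lemma \ref{rem2-def} one has $\mu((-,M)) = -\otimes_\La M$ in $\D(\Mod (\mmod \La)^{\op})$. The crux is to resolve $-\otimes_\La M$ by projectives (that is, representables) of $\Mod (\mmod \La)^{\op}$. Using the identification $-\otimes_\La Q \cong (Q^{*},-)$ for finitely generated projective $Q$ noted in the proof of Corollary \ref{Cor-equi}, the presentation of $M$ together with right-exactness of the tensor product gives exactness of $(Q_1^{*},-) \st{(g^{*},-)}\lrt (Q_0^{*},-) \lrt -\otimes_\La M \lrt 0$; and applying $\Hom_\La(-,N)$ to $Q_0^{*}\st{g^{*}}\lrt Q_1^{*}\lrt {\rm Tr}M \lrt 0$ identifies $\Ker((g^{*},-))$ with the representable $({\rm Tr}M,-)$. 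Hence
\[ 0 \lrt ({\rm Tr}M,-) \lrt (Q_1^{*},-) \st{(g^{*},-)}\lrt (Q_0^{*},-) \lrt -\otimes_\La M \lrt 0 \]
is a finite projective resolution. Viewing this resolution as a length-two complex of representables quasi-isomorphic to $-\otimes_\La M$ and applying the contravariant Yoneda equivalence $\bar{v}^{-1}$, which reverses degrees, identifies $\phi(M)$ with the complex
\[ [\, Q_0^{*} \st{g^{*}}\lrt Q_1^{*} \st{\pi}\lrt {\rm Tr}M \,] \]
concentrated in degrees $0,1,2$, with ${\rm Tr}M$ sitting in degree $2$.

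Next I would pass to the quotient. As $Q_0^{*}$ and $Q_1^{*}$ are projective, the degreewise split short exact sequence of complexes whose subcomplex is the stalk ${\rm Tr}M$ in degree $2$ and whose quotient is $[\,Q_0^{*}\lrt Q_1^{*}\,]$ yields a triangle in $\K^{\bb}(\mmod \La)$ whose third term lies in $\K^{\bb}(\prj \La)$. Therefore the image of $\phi(M)$ in $\frac{\K^{\bb}(\mmod \La)}{\K^{\bb}(\prj \La)}$ is isomorphic to ${\rm Tr}M$ placed in degree $2$, and applying the shift $[-2]$ of the statement produces ${\rm Tr}M$ as a stalk in degree $0$, which is precisely $\la({\rm Tr}M)$. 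This gives commutativity on objects.

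Finally, for naturality I would note that any morphism in $\underline{\rm mod}\mbox{-}\La^{\op}$ lifts to a chain map between chosen projective presentations, unique up to homotopy; since Yoneda, $\mu$, the formation of the resolution above, and passage to the quotient are all functorial, and since ${\rm Tr}$ and $\la$ are honest functors on the stable and quotient categories, the object-wise isomorphisms assemble into a natural isomorphism $\bar{\phi}[-2]\circ\la \cong \la\circ{\rm Tr}$. The main obstacle is the central identification of $\phi(M)$ with the transpose complex: one must correctly recognize $({\rm Tr}M,-)$ as the kernel term of the resolution and keep careful track of the degree reversal introduced by the contravariant functor $\bar{v}^{-1}$, so that ${\rm Tr}M$ lands in exactly the degree annihilated by the shift $[-2]$. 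The naturality bookkeeping is delicate only because ${\rm Tr}$ is defined up to projective summands, which is exactly why the statement is formulated in the stable and Verdier quotient categories.
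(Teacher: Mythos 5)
Your proposal follows essentially the same route as the paper's own proof: both compute $\phi(M)$ for a stalk module by combining $\mu((-,M))\cong -\otimes_\La M$ from Lemma \ref{rem2-def} with the four-term exact sequence $0 \to ({\rm Tr}M,-) \to (Q^*,-) \to (P^*,-) \to -\otimes_\La M \to 0$, read off $\phi(M)$ as the complex $P^* \to Q^* \to {\rm Tr}M$ via diagram \eqref{5.5}, and observe that killing the projective terms in the Verdier quotient leaves ${\rm Tr}M$ shifted by two. Your added remarks on naturality (lifting morphisms to projective presentations) only make explicit what the paper leaves implicit, so the argument is correct and matches the paper.
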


\begin{proof}
Let $M$ be a finitely presented left $\La$-module with no projective direct summands. Consider the functor $(-,M)$ in $\D(\Mod(\mmod \La^{\op}))$.  By Lemma \ref{rem2-def}, $\mu((-,M)) = - \otimes_{\La} M$. If $Q \lrt P \lrt M \lrt 0$ is a projective presentation of $M$, then the exact sequence
$$ 0 \lrt ({\rm Tr}M,-) \lrt (Q^*, -) \lrt (P^*, -) \lrt - \otimes_{\La} M\lrt 0$$
implies that $\mu((-,M))$ is isomorphic to the following complex in $\D(\Mod (\mmod \La)^{\op})$
$$0 \lrt ({\rm Tr}M, -) \lrt (Q^*,-) \lrt (P^*,-) \lrt 0.$$
It follows from the definition of $\phi$, see diagram (\ref{5.5}), that $\phi(M)$ is a complex
$$0 \lrt P^* \lrt Q^* \lrt {\rm Tr}M \lrt 0$$
in $\K^{\bb}(\mmod \La)$. This clearly is isomorphic to ${\rm Tr}M[2]$ in $\frac{\K^{\bb}(\mmod \La)}{\K^{\bb}(\prj \La)}$.
\end{proof}

\begin{remark}
It is known that if $\La$ is an artin $R$-algebra, there is the duality
$$ D: \mmod \La \lrt \mmod \La^{\op}$$
defined by $D(M)= \Hom_{R}(M, E)$, where $E$ is the injective envelope of $R/{\rm rad}R$. The duality $D: \mmod \La \lrt \mmod \La^{\op}$ can be extended to the following duality of triangulated categories
$$\K^{\bb}(\mmod \La) \st{\sim} \lrt \K^{\bb}(\mmod \La^{\op}).$$
Moreover, it is known that the duality $D$ can be restricted to the duality
$$D|: \prj \La \st{\sim}\lrt \inj \La^{\op}.$$
This duality also can be extended to the duality
$$\K^{\bb}(\prj \La) \st{\sim} \lrt \K^{\bb}(\inj \La^{\op})$$
of triangulated categories. Therefore, we can deduce that $D$ induces a duality $$\frac{\K^{\bb}(\mmod \La)}{\K^{\bb}(\prj \La)} \simeq \frac{\K^{\bb}(\mmod \La^{\op})}{\K^{\bb}(\inj \La^{\op})} $$ of triangulated categories, which we denote it again by $D$.
\end{remark}

\begin{corollary}\label{final}
Let $\La$ be an artin algebra of finite global dimension. Then there is the following commutative diagram
\[ \xymatrix{ \frac{\K^{\bb}(\mmod \La)}{\K^{\bb}(\prj \La)} \ar[r]^{ \bar{\phi}[-2]} & \frac{\K^{\bb}(\mmod \La^{\op})}{\K^{\bb}(\prj \La^{\op})} \ar[r]^D & \frac{\K^{\bb}(\mmod \La)}{\K^{\bb}(\inj \La)}\\
\underline{\mmod }\La \ar[u]^{\la} \ar[rr]^{D{\rm Tr}} && \overline{\mmod} \La \ar[u]_{\la'}, }\]
such that $\la$ and $\la'$ are fully faithful and $D{\rm Tr}$ and $D \bar{\phi}[-2]$ are equivalences.
\end{corollary}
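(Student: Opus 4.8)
The plan is to assemble the diagram by pasting together two commutative squares — the left one furnished by Proposition \ref{Comm-Tr} and the right one by the Remark preceding this corollary — and then to read off the two horizontal composites.

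First I would apply Proposition \ref{Comm-Tr} with $\La$ replaced by $\La^{\op}$ (recall $(\La^{\op})^{\op}=\La$). This produces a commutative square whose top arrow is the duality $\bar{\phi}[-2]\colon \frac{\K^{\bb}(\mmod \La)}{\K^{\bb}(\prj \La)}\to \frac{\K^{\bb}(\mmod \La^{\op})}{\K^{\bb}(\prj \La^{\op})}$ of Corollary \ref{Cor-equi}, whose bottom arrow is the Auslander transpose ${\rm Tr}\colon \underline{\mmod}\La\to\underline{\mmod}\La^{\op}$, and whose vertical maps are the fully faithful embeddings $\la$ of Lemma \ref{Lem-embed}. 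This is the left half of the claimed diagram.

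Next I would produce the right square from the preceding Remark, again applied with $\La$ replaced by $\La^{\op}$: this gives the duality $D\colon \frac{\K^{\bb}(\mmod \La^{\op})}{\K^{\bb}(\prj \La^{\op})}\to \frac{\K^{\bb}(\mmod \La)}{\K^{\bb}(\inj \La)}$ induced degreewise by the standard duality $D=\Hom_{R}(-,E)$, which carries projectives to injectives. The key observation is that, in the respective Verdier quotients, the objects $\la(M)$ and $\la'(N)$ — defined as mapping cones of a projective resolution of $M$ and of an injective resolution of $N$ — are isomorphic to the modules $M$ and $N$ concentrated in degree zero, because the resolutions themselves lie in $\K^{\bb}(\prj \La)$, resp. $\K^{\bb}(\inj \La)$, and hence vanish in the quotient (this is precisely the isomorphism exploited in the proofs of Lemmas \ref{Lem-equi} and \ref{Lem-equi2}). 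Consequently $D(\la(M))\cong D(M)=DM\cong\la'(DM)$ in $\frac{\K^{\bb}(\mmod \La)}{\K^{\bb}(\inj \La)}$, so that the quotient-level $D$ intertwines the module-level duality $D\colon\underline{\mmod}\La^{\op}\to\overline{\mmod}\La$ with the embeddings $\la$ and $\la'$. This gives the right square.

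Finally I would paste the two squares. The bottom composite is $D\,{\rm Tr}\colon\underline{\mmod}\La\to\overline{\mmod}\La$, which is exactly the Auslander--Reiten translate and hence an equivalence of stable categories; the top composite $D\circ\bar{\phi}[-2]$ is a composite of two dualities and is therefore a covariant equivalence of triangulated categories. Together with the full faithfulness of $\la$ and $\la'$ (Lemmas \ref{Lem-embed} and \ref{Lem-embed2}), this yields the asserted diagram. The one point needing care — and the only place where this argument differs in flavour from Proposition \ref{Comm-Tr} — is the shift bookkeeping: one must check that the degreewise duality $D$ contributes no shift, so that the right square commutes with $D$ itself rather than some shift of it, in contrast to the genuine $[2]$-shift produced by $\phi$ that forces the correction $\bar{\phi}[-2]$ on the left. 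Reducing everything to the degree-zero representatives $\la(M)\cong M$ and $\la'(N)\cong N$ in the quotients is exactly what disposes of this issue.
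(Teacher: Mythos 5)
Your proposal is correct and follows essentially the same route as the paper: the left square is Proposition \ref{Comm-Tr} (read with $\La$ and $\La^{\op}$ interchanged), the right square comes from the remark on the duality $D$ together with the identifications $\la(M)\cong M$ and $\la'(N)\cong N$ in the respective Verdier quotients, and full faithfulness is Lemmas \ref{Lem-embed} and \ref{Lem-embed2}. In fact you supply more detail than the paper's own two-line proof, in particular by making the right-hand square and the shift bookkeeping explicit.
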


\begin{proof}
By Lemmas \ref{Lem-embed} and \ref{Lem-embed2}, $\la$ and $\la'$ are fully faithful, respectively. The commutativity of the diagram follows from Proposition \ref{Comm-Tr}.
\end{proof}

\section{Serre functor for $\K^{\bb}_{\ac}(\mmod \La)$}\label{Section 4}
In this section, we prove that if $\La$ is an artin algebra of finite global dimension, then $\K^{\bb}_{\ac}(\mmod \La)$ has Serre duality and will investigate the relationship between this Serre duality and the equivalence
\[\xymatrix{ \frac{\K^{\bb}(\mmod \La)}{\K^{\bb}(\prj \La)}  \ar[r]^{D \bar{\phi}[-2]}   & \frac{\K^{\bb}(\mmod \La)}{\K^{\bb}(\inj \La)}}\]
of Corollary \ref{final}.

Recall that for a Hom-finite Krull-Schmidt $R$-linear triangulated category $\CT$, a Serre functor is an auto-equivalence $\mathbb{S} : \CT \lrt \CT$ such that the Serre duality formula holds, that is, we have bifunctorial isomorphisms
\[D \CT(X, Y ) \cong \CT(Y, \mathbb{S}X), \ \ {\rm for \ all} \ \  X,Y \in \CT,\]
where $D$ is the duality $\Hom_R( - , R)$.

To begin, observe that the duality
$$ D: \mmod \La \lrt \mmod \La^{\op}$$ induces the duality
$$D: \mmod (\mmod \La) \lrt \mmod (\mmod \La)^{\op}$$
which maps a functor $F$ to the functor $DF(M)=D(F(M))$ for every $M \in \mmod \La$. So, every injective object in $\mmod (\mmod \La)$ is of the form $D \Hom_{\La}(M,-)$, for some $M \in \mmod \La$. Define a functor
$$\CV: \Prj (\mmod (\mmod \La)) \lrt \Inj (\mmod (\mmod \La))$$
by $\CV(\Hom_{\La}(-,M))= D\Hom_{\La}(M,-)$. It is an equivalence of categories. $\CV$ can be naturally extended to the equivalence
$$\K^{\bb}(\Prj (\mmod (\mmod \La))) \st{\sim}\lrt \K^{\bb}(\Inj (\mmod (\mmod \La)))$$ of triangulated categories, which we denote it again by $\CV$.

Since global dimension of $\mmod(\mmod \La)$ is finite, $\CV$ is in fact the equivalence
$$\D^{\bb}(\mmod (\mmod \La)) \lrt \D^{\bb}(\mmod (\mmod \La)).$$
Now, the same argument as in the proof of Proposition 5.3 of \cite{ABHV} can be applied to prove that this functor is a Serre duality, i.e. for every $\X, \Y \in \D^{\bb}(\mmod (\mmod \La))$ there is the following natural isomorphism
$$\Hom(\X, \Y) \cong D \Hom(\Y, \CV \X),$$
where both Hom are taken in $\D^{\bb}(\mmod (\mmod \La))$.

\begin{proposition}
Let $\La$ be an artin algebra. Then $\K^{\bb}(\mmod \La)$ has Serre duality.
\end{proposition}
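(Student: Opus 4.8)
The plan is to reduce the statement to the Serre duality on $\D^{\bb}(\mmod(\mmod \La))$ established just above, by exhibiting an $R$-linear triangle equivalence $\K^{\bb}(\mmod \La) \simeq \D^{\bb}(\mmod(\mmod \La))$ and transporting the autoequivalence $\CV$ through it. Since a Serre functor is preserved by any $R$-linear triangle equivalence, once such an equivalence is in hand the conclusion is essentially formal.

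First I would build the equivalence in two steps. The Yoneda functor $u: \mmod \La \lrt \mmod(\mmod \La)$, $M \mapsto (-,M)$, is fully faithful and identifies $\mmod \La$ with the full subcategory $\Prj (\mmod(\mmod \La))$ of projective objects, since the projectives of the coherent functor category are exactly the representable functors. Analogously to the proof of Theorem \ref{GrothDuality}, it therefore induces a triangle equivalence $\bar{u}: \K^{\bb}(\mmod \La) \st{\sim}\lrt \K^{\bb}(\Prj (\mmod(\mmod \La)))$. Next, because the global dimension of $\mmod(\mmod \La)$ is finite, the canonical functor $\K^{\bb}(\Prj (\mmod(\mmod \La))) \lrt \D^{\bb}(\mmod(\mmod \La))$ is a triangle equivalence: it is fully faithful on bounded complexes of projectives, and finiteness of the global dimension ensures that every object of $\D^{\bb}(\mmod(\mmod \La))$ admits a bounded projective resolution, giving essential surjectivity. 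Composing yields a triangle equivalence $\Theta: \K^{\bb}(\mmod \La) \st{\sim}\lrt \D^{\bb}(\mmod(\mmod \La))$.

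Next I would transport the Serre functor. The preceding discussion provides an autoequivalence $\CV$ of $\D^{\bb}(\mmod(\mmod \La))$ together with a bifunctorial isomorphism $\Hom(\X,\Y) \cong D\Hom(\Y,\CV \X)$. Setting $\mathbb{S} := \Theta^{-1} \CV \Theta$ and using that $\Theta$ is $R$-linear, hence compatible with the $R$-dual $D$ on morphism spaces, this isomorphism pulls back to $\Hom(X,Y) \cong D\Hom(Y,\mathbb{S}X)$ for all $X,Y \in \K^{\bb}(\mmod \La)$, so that $\mathbb{S}$ is a Serre functor. I would also record that $\K^{\bb}(\mmod \La)$ is Hom-finite, as $\La$ is an artin $R$-algebra and all the relevant $\Hom$ groups are of finite length over $R$, and Krull-Schmidt, so that the notion of Serre functor genuinely applies here.

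I expect the main obstacle to be the second step of the equivalence, namely the essential surjectivity of $\K^{\bb}(\Prj (\mmod(\mmod \La))) \lrt \D^{\bb}(\mmod(\mmod \La))$; this rests entirely on the finiteness of the global dimension of $\mmod(\mmod \La)$, while everything downstream, that is full faithfulness of the comparison functor, $R$-linearity, and the transport of the bifunctorial isomorphism, is formal. A secondary point requiring care is the verification of the Hom-finite Krull-Schmidt hypotheses under which a Serre functor is defined, though these follow routinely from $\La$ being an artin algebra.
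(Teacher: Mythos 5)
Your proposal is correct and follows essentially the same route as the paper: the paper likewise combines the Yoneda identification of $\mmod \La$ with $\Prj(\mmod(\mmod \La))$ and the finiteness of $\gldim(\mmod(\mmod \La))$ to obtain the equivalence $\CQ: \K^{\bb}(\mmod \La) \to \D^{\bb}(\mmod(\mmod \La))$, and then defines the Serre functor $\CU$ by transporting $\CV$ through $\CQ$. The only difference is that you spell out the essential surjectivity step and the Hom-finite Krull--Schmidt hypotheses a bit more explicitly than the paper does.
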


\begin{proof}
There is an equivalence $\D^{\bb}(\mmod (\mmod \La)) \simeq \K^{\bb}(\Prj (\mmod (\mmod \La)))$ of triangulated categories, because $\gldim(\mmod (\mmod \La))$ is finite. So, in view of Yoneda lemma, we get an equivalence
$$\CQ: \K^{\bb}(\mmod \La) \lrt \D^{\bb}(\mmod (\mmod \La))$$
of triangulated categories.
Let $\CU: \K^{\bb}(\mmod \La) \lrt \K^{\bb}(\mmod \La)$ be the functor that commutes the following diagram
\[ \xymatrix{ \D^{\bb}(\mmod (\mmod \La)) \ar[r]^{\CV} & \D^{\bb}(\mmod (\mmod \La)) \\
\K^{\bb}(\mmod \La) \ar[r]^{\CU} \ar[u]^{\CQ} & \K^{\bb}(\mmod \La) \ar[u]_{\CQ}.}\]
It can be easily checked that $\CU$ is also a Serre duality functor, i.e. for every complexes $\X$ and $\Y$ in $\K^{\bb}(\mmod \La)$ we have the following isomorphism
$$\Hom_{\K^{\bb}(\mmod \La)}(\X, \Y) \cong D \Hom_{\K^{\bb}(\mmod \La)}(\Y, \CU \X).$$
\end{proof}

\begin{remark}
The above proposition was proved by Backelin and Jaramillo \cite{BJ} using different approach. Their method is based on the construction of a $t$-structure in $\K^{\bb}(\mmod \La)$. It also can be obtained from \cite[Theorem 3.4]{ZH}. The proof presented here uses functor category techniques.
\end{remark}

\begin{proposition}\label{Kac-Serre}
Let $\La$ be an artin algebra of finite global dimension. Then $\K^{\bb}_{\ac}(\mmod \La)$  has Serre duality.
\end{proposition}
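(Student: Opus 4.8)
The plan is to transport the Serre functor $\CU$ on $\K^{\bb}(\mmod \La)$, constructed in the preceding proposition, down to $\K^{\bb}_{\ac}(\mmod \La)$, correcting for the fact that $\CU$ need not preserve acyclicity. First I would record that $\K^{\bb}_{\ac}(\mmod \La)$ is a Hom-finite Krull--Schmidt $R$-linear triangulated category: it is closed under direct summands in the idempotent-complete category $\K^{\bb}(\mmod \La)$, and its Hom-groups will be identified below with those of $\K^{\bb}(\mmod \La)$, which have finite length over the artinian ring $R$. Since $\La$ has finite global dimension, Lemmas \ref{Lem-equi} and \ref{Lem-equi2} furnish equivalences $\gamma: \K^{\bb}_{\ac}(\mmod \La) \st{\sim}\lrt \frac{\K^{\bb}(\mmod \La)}{\K^{\bb}(\prj \La)}$ and $\gamma': \K^{\bb}_{\ac}(\mmod \La) \st{\sim}\lrt \frac{\K^{\bb}(\mmod \La)}{\K^{\bb}(\inj \La)}$, both acting as the identity on objects, which I will use to shuttle between the two quotients appearing in Corollary \ref{final}.

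The computational heart is two orthogonality statements. A bounded complex of projectives is homotopy-projective, so $\Hom_{\K^{\bb}(\mmod \La)}(P, \Y) \cong \Hom_{\D^{\bb}(\mmod \La)}(P, \Y) = 0$ whenever $P \in \K^{\bb}(\prj \La)$ and $\Y$ is acyclic; dually $\Hom_{\K^{\bb}(\mmod \La)}(\Y, I) = 0$ for $\Y$ acyclic and $I \in \K^{\bb}(\inj \La)$. Applying $\Hom(-, \Y)$, resp. $\Hom(\Y, -)$, to the defining triangle of a roof and using these vanishings, I would deduce, for acyclic $\X, \Y$ and arbitrary $W \in \K^{\bb}(\mmod \La)$, the natural isomorphisms
\[\Hom_{\K^{\bb}_{\ac}(\mmod \La)}(\X, \Y) \cong \Hom_{\K^{\bb}(\mmod \La)}(\X, \Y),\]
\[\Hom_{\K^{\bb}(\mmod \La)}(\Y, W) \cong \Hom_{\frac{\K^{\bb}(\mmod \La)}{\K^{\bb}(\inj \La)}}(\Y, W).\]
The first vanishing also shows, via Serre duality in $\K^{\bb}(\mmod \La)$, that $\CU$ carries $\K^{\bb}(\prj \La)$ into the right orthogonal of the acyclic complexes, which I would identify with $\K^{\bb}(\inj \La)$ using the density of $\gamma'$; hence $\CU(\K^{\bb}(\prj \La)) = \K^{\bb}(\inj \La)$ (reflecting that $\CU$ is a derived Nakayama functor) and $\CU$ descends to an equivalence $\bar{\CU}: \frac{\K^{\bb}(\mmod \La)}{\K^{\bb}(\prj \La)} \st{\sim}\lrt \frac{\K^{\bb}(\mmod \La)}{\K^{\bb}(\inj \La)}$.

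I would then define the candidate Serre functor as the composite of equivalences $\mathbb{S} := \gamma'^{-1} \circ \bar{\CU} \circ \gamma: \K^{\bb}_{\ac}(\mmod \La) \lrt \K^{\bb}_{\ac}(\mmod \La)$; concretely $\mathbb{S}\X$ is the acyclic complex $\gamma'^{-1}(\CU\X)$ correcting $\CU\X$. Being a composite of three equivalences, $\mathbb{S}$ is an auto-equivalence. Finally I would verify the Serre formula by chaining the displayed isomorphisms with the Serre duality of $\K^{\bb}(\mmod \La)$:
\[\begin{aligned} \Hom_{\K^{\bb}_{\ac}(\mmod \La)}(\X, \Y) &\cong \Hom_{\K^{\bb}(\mmod \La)}(\X, \Y) \cong D\Hom_{\K^{\bb}(\mmod \La)}(\Y, \CU\X)\\ &\cong D\Hom_{\K^{\bb}_{\ac}(\mmod \La)}(\Y, \mathbb{S}\X), \end{aligned}\]
where the last step uses the second orthogonality isomorphism together with $\gamma'(\mathbb{S}\X) \cong \CU\X$ in the quotient.

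The main obstacle I anticipate is exactly that $\CU$ fails to preserve acyclicity, so one cannot simply restrict it; the correction through $\gamma'$, which forces the asymmetric use of $\K^{\bb}(\prj \La)$ on the source variable and $\K^{\bb}(\inj \La)$ on the target variable, is the key device and is what makes the two quotients of Corollary \ref{final} enter naturally. The points requiring genuine care are ensuring that every isomorphism above is bifunctorial in both variables, and verifying the identification $\CU(\K^{\bb}(\prj \La)) = \K^{\bb}(\inj \La)$ so that $\mathbb{S}$ is honestly an equivalence rather than merely a functor satisfying the duality formula.
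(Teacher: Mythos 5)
Your construction coincides with the paper's: the paper defines $\mathbb{S}=i_{\rho}\circ\CU\circ i$, where $i_{\rho}$ is the right adjoint of the inclusion $\K^{\bb}_{\ac}(\mmod \La)\hookrightarrow\K^{\bb}(\mmod \La)$ given by $\X\mapsto {\rm cone}(\X\to {\bf I}_{\X})[-1]$ --- which is exactly your $\gamma'^{-1}(\CU\X)$, the quotient-by-$\K^{\bb}(\inj \La)$ description being the standard reformulation of that adjoint --- and the verification of the Serre formula is the same chain of isomorphisms resting on the same orthogonality of acyclic complexes against bounded complexes of projectives, resp.\ injectives. Your explicit check that $\CU$ carries $\K^{\bb}(\prj \La)$ onto $\K^{\bb}(\inj \La)$, so that $\mathbb{S}$ is honestly an auto-equivalence and not merely a right Serre functor, is a worthwhile detail the paper leaves implicit.
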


\begin{proof}
First note that the inclusion functor $i: \K^{\bb}_{\ac}(\mmod \La) \lrt \K^{\bb}(\mmod \La)$ admits a right adjoint $i_{\rho}: \K^{\bb}(\mmod \La) \lrt \K^{\bb}_{\ac}(\mmod \La)$. In fact, $i_{\rho}$ is defined as follows. Let $\X$ be a complex in $\K^{\bb}(\mmod \La)$. It has a K-injective resolution $\iota_{\X}: \X \lrt {\bf I}_{\X}$ with ${\bf I}_{\X} \in \K^{\bb}(\inj \La)$. Then $i_{\rho}(\X)={\rm cone}(\iota_{\X})[-1]$. We set $\mathbb{S}: \K_{\ac}^{\bb}(\mmod \La) \lrt \K^{\bb}_{\ac}(\mmod \La)$ to be the following composition of triangle functors
$$\K_{\ac}^{\bb}(\mmod \La) \st{i}\lrt \K^{\bb}(\mmod \La) \st{\CU}\lrt \K^{\bb}(\mmod \La) \st{i_{\rho}}\lrt \K^{\bb}_{\ac}(\mmod \La).$$
For every two complexes $\X$ and $\Y$ in $\K^{\bb}_{\ac}(\mmod \La)$, there are the following isomorphisms
\[ \begin{array}{lll}
\Hom_{\K^{\bb}_{\ac}(\mmod \La)}(\X, \Y) & \cong \Hom_{\K^{\bb}(\mmod \La)}(i\X, i\Y)\\
&\cong \Hom_{\K^{\bb}(\mmod \La)}(i\Y, \CU i \X) \\
& \cong \Hom_{\K^{\bb}_{\ac}(\mmod \La)}(\Y, i_{\rho}\CU i\X).
\end{array}\]
So $\mathbb{S}: \K^{\bb}_{\ac}(\mmod \La) \lrt \K^{\bb}_{\ac}(\mmod \La)$ is a Serre duality.
\end{proof}

Let $\CT$ be  a Hom-finite $R$-linear Krull-Schmidt triangulated category, where $R$ is a commutative artinian ring. A triangle $X \st{f} \lrt Y \st{g} \lrt Z \st{h} \lrt X[-1]$ in $\CT$ is an Auslander-Reiten triangle if the following conditions are satisfied
\begin{itemize}
\item [$(i)$] $X$ and $Z$ are indecomposable.
\item [$(ii)$] $h \neq 0$.
\item [$(iii)$] If $W$ is an indecomposable  object in $\CT$, then every non-isomorphism $t: W \lrt Z$ factors through $g$.
\end{itemize}

We say that $\CT$ has Auslander-Reiten triangles, if for every indecomposable object $W$ there exist Auslander-Reiten triangles starting and ending at $W$.

\begin{corollary}
Let $\La$ be an artin algebra of finite global dimension. Then the triangulated category $\K^{\bb}_{\ac}(\mmod \La)$ has Auslander-Reiten triangles.
\end{corollary}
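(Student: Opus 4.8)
The plan is to derive the statement directly from the existence of a Serre functor established in Proposition \ref{Kac-Serre}, together with the theorem of Reiten and Van den Bergh \cite[Theorem I.2.4]{RV}. That theorem asserts that, for a Hom-finite $R$-linear Krull-Schmidt triangulated category, the existence of a Serre functor is equivalent to the existence of Auslander-Reiten triangles. Since Proposition \ref{Kac-Serre} provides a Serre duality functor $\mathbb{S}$ on $\K^{\bb}_{\ac}(\mmod \La)$, the entire task reduces to checking that $\K^{\bb}_{\ac}(\mmod \La)$ satisfies the three standing hypotheses of that theorem.

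For $R$-linearity I would simply observe that, as $\La$ is an artin $R$-algebra, $\mmod \La$ is $R$-linear, and this structure passes to the homotopy category $\K^{\bb}(\mmod \La)$ and hence to its full triangulated subcategory $\K^{\bb}_{\ac}(\mmod \La)$. For Hom-finiteness, note that for $X,Y \in \mmod \La$ the $R$-module $\Hom_{\La}(X,Y)$ has finite length; consequently, for bounded complexes each morphism space in $\K^{\bb}(\mmod \La)$ is a subquotient of a finite direct sum of such modules and is therefore again of finite length over $R$. Passing to acyclic complexes does not enlarge morphism spaces, so $\K^{\bb}_{\ac}(\mmod \La)$ is Hom-finite.

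The remaining point, and the one I expect to require the most care, is the Krull-Schmidt property. Here I would first recall that $\K^{\bb}(\mmod \La)$ is itself Krull-Schmidt: being Hom-finite over the commutative artinian ring $R$ and idempotent complete, its objects have semiperfect endomorphism rings and hence decompose uniquely into indecomposables. Then I would note that $\K^{\bb}_{\ac}(\mmod \La)$ is a thick subcategory, in particular closed under direct summands, so it inherits both idempotent completeness and Hom-finiteness and is thus Krull-Schmidt as well. With the three hypotheses verified, \cite[Theorem I.2.4]{RV} applies to the Serre functor $\mathbb{S}$ and yields Auslander-Reiten triangles in $\K^{\bb}_{\ac}(\mmod \La)$, completing the proof.
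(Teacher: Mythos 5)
Your proposal is correct and follows the same route as the paper, which simply combines Proposition \ref{Kac-Serre} with Theorem I.2.4 of \cite{RV}. The extra care you take in verifying that $\K^{\bb}_{\ac}(\mmod \La)$ is Hom-finite, $R$-linear and Krull--Schmidt is welcome detail that the paper leaves implicit, but it does not change the argument.
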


\begin{proof}
It follows directly from Proposition \ref{Kac-Serre} and Theorem I.2.4 of \cite{RV}.
\end{proof}

The last theorem of the paper establishes a tight connection between the the functor $\mathbb{S}$ and the equivalence $D {\bar{\phi}[-2]}$  of Corollary \ref{final}.

\begin{theorem}
Let $\La$ be an artin algebra of finite global dimension. Then there is the following commutative diagram
\[ \xymatrix{ \frac{\K^{\bb}(\mmod \La)}{\K^{\bb}(\prj \La)} \ar[r]^{ \bar{\phi}[-2]} & \frac{\K^{\bb}(\mmod \La^{\op})}{\K^{\bb}(\prj \La^{\op})} \ar[r]^D & \frac{\K^{\bb}(\mmod \La)}{\K^{\bb}(\inj \La)}\\
\K^{\bb}_{\ac}(\mmod \La) \ar[u]^{\sim} \ar[rr]^{\mathbb{S}[-2]} && \K^{\bb}_{\ac}(\mmod \La) \ar[u]_{\sim}, }\]
where columns are equivalences of Lemmas \ref{Lem-equi} and \ref{Lem-equi2}.
\end{theorem}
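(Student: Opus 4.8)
The plan is to reduce the statement to the uniqueness of Serre functors. Since $\La$ is an artin $R$-algebra of finite global dimension, the category $\K^{\bb}_{\ac}(\mmod \La)$ is Hom-finite and Krull-Schmidt over the commutative artinian ring $R$, so by \cite{BK} (see also \cite{RV}) it admits at most one Serre functor up to natural isomorphism. Writing $\gamma$ and $\gamma'$ for the equivalences of Lemmas \ref{Lem-equi} and \ref{Lem-equi2} (the two columns), set
$$\Phi := (\gamma')^{-1}\circ \big(D\,\bar{\phi}[-2]\big)\circ\gamma,$$
the auto-equivalence of $\K^{\bb}_{\ac}(\mmod \La)$ obtained by transporting the top row. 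Commutativity of the diagram is precisely the assertion $\Phi\cong\mathbb{S}[-2]$, where $\mathbb{S}$ is the Serre functor of Proposition \ref{Kac-Serre}. Hence it is enough to prove that $\Phi[2]$ is again a Serre functor on $\K^{\bb}_{\ac}(\mmod \La)$: uniqueness then yields $\Phi[2]\cong\mathbb{S}$, that is $\Phi\cong\mathbb{S}[-2]$.

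To verify that $\Phi[2]$ is a Serre functor, the key point is that the Matlis dual occurring in the Serre formula is supplied by the composite $D\bar{\phi}$ itself, rather than having to be produced by a naive chase of morphism spaces through the two dualities. Indeed, by the proof of Corollary \ref{Cor-equi} the duality $\phi$ restricts on projectives to the $\La$-dual $\Hom_{\La}(-,\La)$, so that $D\bar{\phi}$ is nothing but the (stable) Nakayama functor $\nu = D\,\Hom_{\La}(-,\La)$ transported to the quotient categories. For this functor the bifunctorial isomorphism $D\,\Hom_{\La}(P,N)\cong\Hom_{\La}(N,\nu P)$, valid for projective $P$ and extended to bounded complexes, is exactly the Serre duality of the derived picture; passing to $\K^{\bb}_{\ac}(\mmod \La)$ through $\gamma$ and $\gamma'$ and inserting the shift $[-2]$ --- forced by Proposition \ref{Comm-Tr}, where $\phi(M)\cong {\rm Tr}M[2]$ --- produces the required isomorphism $D\,\Hom(X,Y)\cong\Hom(Y,\Phi[2]X)$ for $X,Y\in\K^{\bb}_{\ac}(\mmod \La)$. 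As an independent check one restricts along the fully faithful functors $\la,\la'$ of Lemmas \ref{Lem-embed} and \ref{Lem-embed2}: by Corollary \ref{final} the top row acts there as $D{\rm Tr}$, the Auslander-Reiten translate, which is precisely the expected value of $\mathbb{S}[-2]$ on the generating module images.

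The main obstacle will be the bookkeeping needed to make this precise across the several functor categories and their opposites: one must track how the $\La$-dual $\Hom_{\La}(-,\La)$, the Matlis duality $D=\Hom_{R}(-,E)$, and the Nakayama functor $\CV$ of Section \ref{Section 4} correspond under the equivalences $\gamma,\gamma'$, and in particular confirm that the Matlis dual produced by $D\bar{\phi}$ coincides with the intrinsic Serre duality of $\K^{\bb}_{\ac}(\mmod \La)$ built from $\CV$, rather than a twist of it. Equally delicate is the precise placement of the shift by $2$, which originates in the isomorphism $\phi(M)\cong{\rm Tr}M[2]$ of Proposition \ref{Comm-Tr} and compensates for the passage between the derived and the acyclic models $\K^{\bb}_{\ac}(\mmod \La)\simeq \frac{\K^{\bb}(\mmod \La)}{\K^{\bb}(\prj \La)}\simeq\frac{\K^{\bb}(\mmod \La)}{\K^{\bb}(\inj \La)}$. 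Once these identifications are in place, the Hom-finite Krull-Schmidt hypothesis legitimizes the appeal to uniqueness of Serre functors and closes the argument.
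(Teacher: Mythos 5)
Your overall strategy --- transport the top row to an auto-equivalence $\Phi$ of $\K^{\bb}_{\ac}(\mmod \La)$, show that $\Phi[2]$ is a Serre functor, and invoke uniqueness of Serre functors --- is legitimate and genuinely different from the paper's proof, which instead computes both composites on the stalk complex of a module $M$ (finding $D{\rm Tr}M$ in both cases) and then extends to arbitrary bounded complexes by induction on their length. The reduction itself is sound: $\K^{\bb}_{\ac}(\mmod \La)$ is Hom-finite and Krull-Schmidt, and a Serre functor, when it exists, is unique up to natural isomorphism by Yoneda.

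The gap is in the middle step. Your verification that $\Phi[2]$ satisfies the Serre formula rests on the assertion that $D\bar{\phi}$ ``is nothing but the stable Nakayama functor transported to the quotient categories,'' together with the isomorphism $D\Hom_{\La}(P,N)\cong \Hom_{\La}(N,\nu P)$. But that isomorphism exhibits $\nu$ as a Serre functor on $\K^{\bb}(\prj \La)\simeq \D^{\bb}(\mmod \La)$, which is a different category from $\frac{\K^{\bb}(\mmod \La)}{\K^{\bb}(\prj \La)}\simeq \K^{\bb}_{\ac}(\mmod \La)$; the Serre functor of the latter is $i_{\rho}\,\CU\, i$ as in Proposition \ref{Kac-Serre}, built from the Serre functor $\CV$ of the functor category $\mmod(\mmod \La)$, and there is no direct mechanism for ``passing'' Happel's formula through $\gamma$ and $\gamma'$. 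Moreover, the identification of $D\bar{\phi}[-2]$ with $D{\rm Tr}$ is only available on modules (Corollary \ref{final}); extending it to all objects of $\K^{\bb}_{\ac}(\mmod \La)$ is essentially the content of the theorem, so invoking it to verify the Serre property is circular. Your ``independent check'' on modules is in fact the substantive computation --- it is what the paper's proof actually does --- but agreement on modules alone does not yield a natural isomorphism of triangle functors on the whole category: you still need either the induction on the length of bounded complexes (as in the paper), or an honest bifunctorial verification of $D\Hom(X,Y)\cong \Hom(Y,\Phi[2]X)$ for all $X,Y\in\K^{\bb}_{\ac}(\mmod \La)$, neither of which is supplied.
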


\begin{proof}
Let $M$ be a finitely presented $\La$-module with no projective direct summands. Then $\CU(M)$ is isomorphic to the following complex
\[\xymatrix@C=0.8cm@R=0.1cm{&& {\rm deg}-2 &{\rm deg}-1 & {\rm deg}0&  &\\
\cdots \ar[r] & 0\ar[r]  & D{\rm Tr}M \ar[r] & DP_1^* \ar[r]^{Df^*}  & DP_0^* \ar[r] & 0 \ar[r] & \cdots,  }\]
where $P_1 \st{f} \lrt P_0 \lrt M\lrt 0$ is the minimal projective resolution of $M$.
Let
\[ \xymatrix{ \CU(M)\ar[d]^{\iota_{\CU(M)}} : & 0 \ar[r] & D{\rm Tr} M \ar[r] \ar[d] & D P_1^* \ar[r]^{Df^*} \ar[d] & DP_0^* \ar[r] \ar[d] & 0 \ar[r] \ar[d] & \cdots \\
{\bf I}: & 0 \ar[r] & I^0 \ar[r] & I^1 \ar[r] & I^2 \ar[r] & I^3 \ar[r] & \cdots \ar[r] & I^m \ar[r] & 0 }\]
be a K-injective resolution of $\CU(M)$. By definition $\mathbb{S}(M) = {\rm cone}(\iota_{\CU(M)})[-1]$ and so it is isomorphic in $\frac{\K^{\bb}(\mmod \La)}{\K^{\bb}(\inj \La)}$ to $D{\rm Tr}M[2]$. Hence, $\mathbb{S}[-2](M)=D{\rm Tr} M$.

On the other hand by Corollary \ref{final}, $D (\bar{\phi}[-2])(M)=D{\rm Tr}M$, for every $M \in \mmod A$.
Now, an induction argument on the length of the bounded complexes in $\K^{\bb}_{\ac}(\mmod \La)$ works to prove the commutativity of the desired diagram. See the proof of Proposition \ref{AusGJ-Dualtity} for similar argument.
\end{proof}



\begin{thebibliography}{9999}
\bibitem [AAHV]{AAHV} {\sc J. Asadollahi, N. Asadollahi, R. Hafezi, R. Vahed,} {\sl Auslander's Formula: Variations and Applications,} Submitted.

\bibitem [ABHV]{ABHV} {\sc J. Asadollahi, P. Bahiraei, R. Hafezi and R. Vahed,} {\sl On relative derived categories}, Comm. Algebra {\bf 44} (2016) 5454-5477.

\bibitem [A1]{As1} {\sc M. Auslander,} {\sl Coherent functors,} 1966 Proc. Conf. Categorical Algebra (La Jolla, Calif., 1965) pp. 189-231 Springer, New York.

\bibitem [A2]{A-Functors} {\sc M. Auslander,} {\sl Functors and morphisms determined by objects,} Representation theory of algebras (Proc. Conf., Temple Univ., Philadelphia, Pa., 1976), pp. 1-244. Lecture Notes in Pure Appl. Math., Vol. 37, Dekker, New York, 1978.

\bibitem [A3]{A-Stable} {\sc M. Auslander,} {\sl Isolated Singularities and Existence of Almost Split Sequences,} Representation Theory II {\bf 1178} (1984), 194-241.

\bibitem [BJ]{BJ} {\sc E. Backelin and O. Jaramillo,} {\sl Auslander-Reiten sequences and t-structures on the homotopy category of an abelian category,} J. Algebra {\bf 339} (2011), 80-96.

\bibitem [BK]{BK} {\sc A. I. Bondal and M. M. Kapranov,} {\sl Representable functors, Serre functors, and reconstructions,} Izv. Akad. Nauk SSSR Ser. Mat. {\bf 53} (1989), no. 6, 1183-1205, 1337.

\bibitem [BEIJR]{BEIJR} {\sc D. Bravo, E. E. Enochs, A. C. Iacob, O. M. G. Jenda, J. Rada,} {\sl Cotorsion pairs in $\C(R-Mod)$,} Rocky Mountain J. Math. {\bf 42} (2012),  no. 6, 1787-1802.

\bibitem [GJ]{GJ} {\sc L. Gruson and C. U. Jensen,} {\sl Dimensions cohomologiques reli\`{e}es aux foncteurs lim,} Lecture Notes in Mathematics {\bf 867}, Springer-Verlag, 1981, 234-294.

\bibitem [H]{H} {\sc I. Herzog,} {\sl Contravariant functors on the category of finitely presented modules,} Israel J. Math. {\bf 167} (2008), 347-410.

\bibitem [JL]{JL} {\sc C. U. Jensen and H. Lenzing,} {\sl Model theoretic algebra with particular emphasis on fields, rings, modules,} Algebra Logic Appl. vol. 2. Gordon and Breach, New York (1989).

\bibitem [KY]{KY} {\sc M. Kalcka and D. Yang,} {\sl Relative singularity categories I: Auslander resolutions,} Adv. Math., {\bf 301} (2016) 973-1021.

\bibitem [Ke]{Keller} {\sc B. Keller,} {\sl Deriving DG categories,} Ann. Sci. \'{E}cole Norm. Sup. (4) {\bf 27} (1994), no. 1, 63-102.

\bibitem [K1]{K12} {\sc H. Krause,} {\sl Approximations and adjoints in homotopy categories,} Math. Ann. {\bf 353} (2012), 765-781.

\bibitem [K2]{K} {\sc H. Krause,} {\sl Deriving Auslander's formula,} Doc. Math. {\bf 20} (2015) 669-688.

\bibitem [M]{Mi} {\sc J. Miyachi,} {\sl Localization of Triangulated Categories and Derived Categories,} J. Algebra {\bf 141} (1991), 463-483.

\bibitem [RV]{RV} {\sc I. Reiten and M. Van den Bergh,} {\sl Noetherian hereditary abelian categories satisfying Serre duality,} J. Amer. Math. Soc. {\bf 15} (2002), 295-366.

\bibitem [St]{St2} {\sc J. \v{S}\v{t}ov\'{i}\v{c}ek,} {\sl Deconstructibility and the Hill lemma in Grothendieck categories,} Forum Math. {\bf 25} (2013), no. 1, 193-219.

\bibitem [ZH]{ZH} {\sc Y. Zheng and  Zh. Huang,} {\sl Auslander-Reiten Triangles in Homotopy Categories,} available at arxiv:1511.08964.

\end{thebibliography}
\end{document}